\documentclass[a4paper]{article}
\usepackage{authblk}
\usepackage{cite}
\usepackage{amsthm,amssymb,amsmath,mathrsfs, siunitx}
\usepackage{fancyhdr}
\usepackage{enumerate}
\usepackage{verbatim}
\usepackage{pstricks}
\usepackage{graphicx}
\usepackage{placeins}
\usepackage{tikz}
\usepackage{mathtools}
\usepackage{caption}
\usepackage[top=1.5cm, bottom=1.5cm, left=1.2cm, right=1.3cm]{geometry}
\newtheorem{theorem}{Theorem}[section]
\newtheorem{defi}[theorem]{Definition}
\newtheorem{lemma}[theorem]{Lemma}
\newtheorem{example}[theorem]{Example}
\newtheorem{prop}[theorem]{Proposition}

\newtheorem{corol}[theorem]{Corollary}
\newtheorem{remark}[theorem]{Remark}

\newtheorem*{que}{Question}

\begin{document}
	
	\title{Recognizability of Being Point Determining}
	\author[1]{M. H. Shirdareh Haghighi\thanks{Corresponding author. Email: \texttt{shirdareh@shirazu.ac.ir}}}
	\author[1]{Asma Namazi\thanks{Email: \texttt{asma.namazi@outlook.com}}}
	\author[1]{Zahra Rahimi\thanks{Email: \texttt{zahra.paliz.rahimi@gmail.com}}}
	\author[1]{A. M. Ghazanfari\thanks{Email: \texttt{amir.m.ghazanfari@gmail.com}}}
	
	\affil[1]{Department of Mathematics and Computer Science, Shiraz University, Shiraz, Iran}
	\date{\today}
	\maketitle
\begin{abstract}
	A graph $G$ is point determining, if no two vertices have the same neighborhoods. In this paper, we show that this property is recognizable from the deck of cards of a graph.
	
	\noindent\textbf{Keywords:} graph reconstruction, recognizable graph property, point determining graph
	
	\noindent\textbf{AMS subject classification 2020:} 05C60, 05C75
\end{abstract}

\section{Introduction and Preliminaries}
In this paper all graphs are finite and simple, i.e. without any loops and multiple edges. The graph theoric notations and definitions are as in \cite{bondy2007}.

The graph reconstruction problem, introduced independently by Kelly \cite{kelly1957} and Ulam \cite{ulam1960}, is one of the most studied open problems in graph theory. Given a graph $G$, the \textit{deck} of $G$ is the multiset of all unlabeled vertex-deleted subgraphs (cards) of $G$. The Reconstruction Conjecture states that every graph with at least three vertices is uniquely determined, up to isomorphism, by its deck of cards. Given a deck $D$ of cards, any graph $H$ whose deck is $D$ is referred to as a \textit{solution} to this deck.

Over the years, reconstruction theory has developed into a rich area of research concerned with determining which graph classes, properties, and invariants can be recovered from the deck of a graph. As stated in \cite{chartrand2015}, a graphical parameter or property is said to be \textit{recognizable} (\textit{reconstructible}) if it can be determined from the deck of cards of a graph. Among the elementary examples of recognizable properties are the order, size, and degree sequence of a graph. A more general result is provided by Kelly's Lemma \cite{kelly1957}, which states that the number of subgraphs of $G$ isomorphic to a fixed graph $H$, where $|V(H)|<|V(G)|$, is reconstructible from the deck. This immediately yields the reconstructibility of several subgraph-counting parameters, including the number of triangles. More sophisticated recognizable properties include the chromatic polynomial, the Tutte polynomial, and the characteristic polynomial \cite{bondy1991}, \cite{tutte1979}.

Among the graph classes arising naturally in structural graph theory are the point determining graphs introduced by Sumner \cite{sumner1973}. A graph is point determining if no two distinct vertices have the same neighborhoods. Point determining graphs eliminate the presence of false twins, that are pairs of non-adjacent vertices with the same neighborhoods, and therefore represent graphs whose vertices are uniquely determined by their neighborhoods. Point determining graphs arise naturally in the study of graph symmetries, graph realizations, and graph homomorphisms. They have been investigated extensively by some authors including Sumner\cite{sumner1973} and Geoffroy \cite{geoffroy1978}, leading to a variety of structural and enumerative results.

The close connection between neighborhood structure and reconstruction problem suggests that point determining graphs should be a natural object of study within reconstruction theory. Vertices with identical neighborhoods often create ambiguities in vertex-deleted subgraphs, whereas, point determining graphs seem to reduce these ambiguities. Consequently, questions concerning the recognizability and reconstructibility of being point determining and associated properties arise naturally in this context.

\begin{defi}
Let $G$ be a graph. The neighborhood of a vertex $x$ in $G$, denoted by $N_G(x)$, is the set of vertices that are adjacent to $x$. Two vertices $x$ and $y$ are false twins if $N_G(x)=N_G(y)$. A graph is called \textit{point determining (PD)} if no two vertices have the same neighborhoods.
\end{defi}

Note that a pair of false twins are non-adjacent.
The following theorem about point determining graphs is very crucial.

\begin{theorem}\label{nucleus}
	\cite[Theorem 2]{sumner1973} Let $G$ be a nontrivial point determining graph. Then there exists a vertex $x \in G$ such that $G-x$ is point determining. \hfill $\square$
\end{theorem}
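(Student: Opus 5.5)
We argue by contradiction. Suppose $G$ is a nontrivial point determining graph such that $G-x$ fails to be point determining for every vertex $x$. The plan is to extract from this a configuration of vertices, show that such configurations propagate, and then contradict a maximal choice of degree.

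\textbf{Basic configuration.} Fix a vertex $x$. Since $G-x$ is not point determining, there are distinct $u,v\neq x$ with $N_G(u)\setminus\{x\}=N_G(v)\setminus\{x\}$. Since $G$ is point determining, $N_G(u)\neq N_G(v)$, so $x$ is adjacent to exactly one of them. We call $(x,u,v)$ a \emph{triple} if
$$N_G(v)=N_G(u)\cup\{x\},\qquad x\notin N_G(u),$$
so that $\deg v=\deg u+1$. Every vertex $x$ is the first entry of some triple. Note also that $u\not\sim v$: the vertex $u$ lies in $N_G(v)$ only if $u\in N_G(u)$ or $u=x$, and neither holds.

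\textbf{Propagation lemma.} Let $(x,u,v)$ be a triple. I claim that \emph{deleting $u$} produces a triple $(u,x,d)$, that is, $N_G(d)=N_G(x)\cup\{u\}$. Apply the basic configuration to $u$ to get a triple $(u,c,d)$, so $N_G(d)=N_G(c)\cup\{u\}$ and $c\not\sim u$.
\begin{itemize}
\item Since $d\sim u$, we have $d\in N_G(u)\subseteq N_G(v)$, so $v\in N_G(d)$.
\item As $v\neq u$, this gives $v\in N_G(c)$, hence $c\in N_G(v)=N_G(u)\cup\{x\}$.
\item But $c\notin N_G(u)$, so $c=x$.
\end{itemize}
Thus $(u,x,d)$ is a triple. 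Symmetrically, \emph{deleting $v$} gives a triple $(v,a,b)$. If $b\neq x$, then $b\in N_G(v)\setminus\{x\}\subseteq N_G(u)$, so $u\in N_G(b)\setminus\{v\}=N_G(a)$, i.e.\ $a\in N_G(u)\subseteq N_G(v)$. This contradicts $a\not\sim v$. Hence $b=x$, and $(v,a,x)$ is a triple, with $N_G(x)=N_G(a)\cup\{v\}$.

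\textbf{Contradiction via extremality.} Among all triples, choose $(x,u,v)$ with $\deg v$ maximal. By the second half of the lemma, $(v,a,x)$ is a triple. Applying the first half of the lemma to $(v,a,x)$, deleting its middle vertex $a$, yields a triple $(a,v,d)$ with
$$N_G(d)=N_G(v)\cup\{a\},\qquad a\notin N_G(v).$$
Then $\deg d=\deg v+1$, contradicting the maximality of $\deg v$. Hence some $G-x$ is point determining.

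The main obstacle is the propagation lemma, specifically identifying the forced vertex ($c=x$, respectively $b=x$). The remaining bookkeeping is that all the vertices named are distinct, which follows from non-adjacency and the fact that $u,v\neq x$ throughout.
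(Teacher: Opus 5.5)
Your proof is correct and complete. The paper gives no proof of this statement: it quotes the result from Sumner \cite{sumner1973}, and likewise quotes Lemma \ref{birooni}. So the only comparison available is with the machinery the paper builds around it.

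Your propagation lemma is exactly Lemma \ref{birooni}. Write your triple $(x,u,v)$ as $N_G(u)=N_G(v)-\{x\}$. Then your first half is part (i), applied with $u\notin G^\circ$, and your second half is part (ii), applied with $v\notin G^\circ$. In effect you have given self-contained proofs of Sumner's Lemmas 6 and 7, which the paper only cites.

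Your extremal step is the same device the paper uses in Lemma \ref{nKH} and Corollary \ref{daraje}(i): pick a vertex of maximum degree and let Lemma \ref{birooni} produce one of larger degree. In the paper's notation, you build the type (II) chain $u\rightsquigarrow x\rightarrow v\rightsquigarrow a\rightarrow d$. By Remark \ref{fasele}, degrees strictly increase along the even positions $u,v,d$. With no nucleus vertex, such a chain could always be pushed two steps further, which the maximality choice forbids.

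This termination argument is cleaner than the one implicit in Lemma \ref{sequence-existence}. That lemma instead relies on the extended $\Omega$-sequence consisting of distinct vertices, together with finiteness.

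What your route buys is a short, fully self-contained proof. What the paper's $\Omega$-sequence framework buys is extra structure, which it needs later: every non-nucleus vertex lies on a saturated sequence whose endpoints are nucleus vertices.

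One small remark: your closing comment about distinctness bookkeeping is unnecessary. The only distinctness you actually use is between entries of a single triple (e.g.\ $u\neq v$, $a\neq x$). The contradiction $\deg d=\deg v+1$ does not care whether $d$ coincides with a previously named vertex.
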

 
 Such a vertex $x$, in Theorem \ref{nucleus}, is called a nucleus vertex of $G$ and the set of nucleus vertices of $G$ is denoted by $G^\circ$.

In Section \ref{G01} we characterize PD graphs $G$ with a single nucleus vertex and discuss PD graphs $G$ with $|G^\circ|=2$. In Section \ref{main} we prove our main result that is reconizability of being point determining. We conclude this paper by a relevant question in Section \ref{conclusion}.

\section{PD Graphs with at Most Two Nucleus Vertices}\label{G01}

In this section, first we give a full characterization of PD graphs with $|G^\circ|=1$ and then give properties of PD graphs with $|G^\circ|=2$ which are needed in this paper.

For $n\geq 1$ we denote by $\mathcal{A}_n$ the bipartite graph with parts $\{u_1,\ldots,u_n\}$ and $\{v_1,\ldots,v_n\}$, such that $u_i$ is adjacent to $v_j$ if $i \geq j$, $1\leq i,j \leq n$. See Figure \ref{An}. Clearly, $\mathcal{A}_n$ satisfies the following properties.

\begin{itemize}
	\item[(\AA1)] $N_{\mathcal{A}_n}(v_i )=N_{\mathcal{A}_n}(v_{i-1} )-\{u_{i-1}\}$, for $2\leq i\leq n$,
	\item[(\AA2)] $N_{\mathcal{A}_n}(u_i )=N_{\mathcal{A}_n}(u_{i+1} )-\{v_{i+1}\}$, for $1 \leq i \leq n-1$,
	\item[(\AA3)] $\mathcal{A}_n$ is PD with $\mathcal{A}_n^\circ =\{v_1 ,u_n\}$,
	\item[(\AA4)] $\mathcal{A}_n-\{u_n , v_n\} = \mathcal{A}_{n-1}$.
\end{itemize} 
\begin{figure}[h]
\begin{center}
\begin{tikzpicture}[
	vertex/.style={
		circle,
		draw,
		fill=white,
		inner sep=0pt,
		minimum size=7pt
	},
	edge/.style={
		line width=0.8pt
	},
	dashededge/.style={
		line width=0.8pt,
		dash pattern=on 3pt off 2pt on 1pt off 2pt
	}
	]
	
	
	\node[vertex] (u1) at (0,1.5) {};
	\node[vertex] (u2) at (2,1.5) {};
	\node[vertex] (u3) at (4,1.5) {};
	\node[vertex] (un1) at (6,1.5) {};
	\node[vertex] (un) at (8,1.5) {};
	
	\node[vertex] (v1) at (2,0) {};
	\node[vertex] (v2) at (4,0) {};
	\node[vertex] (v3) at (6,0) {};
	\node[vertex] (vn1) at (8,0) {};
	\node[vertex] (vn) at (10,0) {};
	
	
	\node[above=2pt] at (u1) {$u_1$};
	\node[above=2pt] at (u2) {$u_2$};
	\node[above=2pt] at (u3) {$u_3$};
	\node[above=2pt] at (un1) {$u_{n-1}$};
	\node[above=2pt] at (un) {$u_n$};
	
	\node[below=2pt] at (v1) {$v_1$};
	\node[below=2pt] at (v2) {$v_2$};
	\node[below=2pt] at (v3) {$v_3$};
	\node[below=2pt] at (vn1) {$v_{n-1}$};
	\node[below=2pt] at (vn) {$v_n$};
	
	
	\draw[edge] (u1)--(v1);
	
	\draw[edge] (u2)--(v1);
	\draw[edge] (u2)--(v2);
	
	\draw[edge] (u3)--(v1);
	\draw[edge] (u3)--(v2);
	\draw[edge] (u3)--(v3);
	
	\draw[edge] (v1)--(un);
	\draw[edge] (v2)--(un);
	\draw[edge] (v3)--(un);
	\draw[edge] (un1)--(vn1);
	\draw[edge] (un)--(vn1);
	\draw[edge] (un)--(vn);
	\draw[edge] (un1)--(v1);
	\draw[edge] (un1)--(v2);
	\draw[edge] (un1)--(v3);
	
	\draw[dashededge] (u3)--(un1);
	\draw[dashededge] (v3)--(vn1);
	
\end{tikzpicture}
\caption{The graph $\mathcal{A}_n$}\label{An}
\end{center}
\end{figure}



\begin{prop}
Let $k_1,\cdots,k_m$ be positive integers. Then the graph $G=\mathcal{A}_{k_1}+\mathcal{A}_{k_2}+\cdots+\mathcal{A}_{k_m}+K_1$ is a PD graph with $G^\circ=V(K_1)$.
\end{prop}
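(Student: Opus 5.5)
We read $+$ as disjoint union, since the isolated vertex $K_1$ is what makes the argument work. Write $w$ for the vertex of $K_1$. The plan has three parts: show $G$ is PD, show $G-w$ is PD, and show that $G-x$ has a pair of false twins for every $x\neq w$. The tools are the properties (\AA1)--(\AA3) of $\mathcal{A}_n$ and the fact that in a disjoint union, neighborhoods stay inside components.

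\emph{Step 1: $G$ and $G-w$ are PD.} By (\AA3) each $\mathcal{A}_{k_j}$ is PD. Every vertex of $\mathcal{A}_{k_j}$ has nonempty neighborhood, since $u_i\sim v_1$ and $v_i\sim u_n$. Two vertices in different components have nonempty neighborhoods lying in different components, so these neighborhoods differ. Hence $G-w=\mathcal{A}_{k_1}+\cdots+\mathcal{A}_{k_m}$ is PD. In $G$ the vertex $w$ is the only vertex with empty neighborhood, so $G$ is PD as well. This gives $w\in G^\circ$.

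\emph{Step 2: no other vertex is a nucleus.} Let $x$ be a vertex of some component $\mathcal{A}_n=\mathcal{A}_{k_j}$. We find false twins in $G-x$ case by case.
\begin{itemize}
\item If $x=v_1$: the only neighbor of $u_1$ is $v_1$, so $u_1$ becomes isolated in $G-x$ and is a false twin of $w$.
\item If $x=u_n$: the only neighbor of $v_n$ is $u_n$, so $v_n$ becomes isolated and is a false twin of $w$. This case also covers $n=1$, where $\mathcal{A}_1$ is a single edge.
\item If $x=v_i$ with $2\le i\le n$: by (\AA2), $N(u_{i-1})=N(u_i)-\{v_i\}$, so $u_{i-1}$ and $u_i$ have equal neighborhoods in $G-v_i$.
\item If $x=u_i$ with $1\le i\le n-1$: by (\AA1), $N(v_{i+1})=N(v_i)-\{u_i\}$, so $v_i$ and $v_{i+1}$ have equal neighborhoods in $G-u_i$.
\end{itemize}
These four cases exhaust $V(\mathcal{A}_n)$. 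Since the other components are untouched, $G-x$ is not PD in every case, so $G^\circ=\{w\}$.

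\emph{Main obstacle.} The only delicate point is the case analysis in Step 2. We must check that the index ranges in (\AA1) and (\AA2) together with the two endpoint cases $v_1$ and $u_n$ cover every vertex of $\mathcal{A}_n$, including the degenerate case $n=1$. This bookkeeping is routine.
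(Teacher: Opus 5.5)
Your proof is correct and follows essentially the same route as the paper. Both arguments use that a disjoint union of PD graphs with a single isolated vertex is PD. Both get non-nuclearity of the inner vertices of each $\mathcal{A}_n$ from the within-component twin relations (\AA1)--(\AA2), and both dispose of the two nuclei $v_1,u_n$ of $\mathcal{A}_n$ by isolating $u_1$ or $v_n$, which becomes a false twin of $w$. Your version simply spells out the twin pairs and the index coverage that the paper leaves implicit.
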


\begin{proof}
First note that the disjoint union of some PD graphs with at most one isolated vertex in the union, is PD. By (\AA1) and (\AA2) all non-nucleus vertices of $\mathcal{A}_{k_i}$, $1 \leq i \leq m$, are still non-nucleus in $G$. Let $ n \in \{k_1 , \ldots, k_m\}$ and consider $\mathcal{A}_n$ with vertices  $\{u_1 , \ldots, u_{n}, v_1 , \ldots , v_{n}\}$ (according to the definition). By deletion of $v_1$, the vertex $u_1$ is the false twin of the isolated vertex. Similarly, by deletion of $u_n$, the vertex $v_n$ is the false twin of the isolated vertex. Hence, any nucleus vertex of $\mathcal{A}_{n}$, is not nucleus in $G$. Clearly, the isolated vertex is in $G^\circ$ and we have $G^\circ=V(K_1)$.
\end{proof}																																																										

The converse of this proposition is also true. First we need more results from \cite{sumner1973}.
\begin{prop}\label{noisole}
	\cite[Corollary 1]{sumner1973} Let $G$ be a point determining graph that has no isolated vertices, then $|G^\circ|\geq 2$.
\end{prop}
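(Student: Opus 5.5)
The plan is to argue by contradiction from Theorem \ref{nucleus}. Let $G$ be point determining with no isolated vertices; then $|V(G)|\ge 2$, so $G$ is nontrivial and Theorem \ref{nucleus} gives a nucleus vertex $x$. Assume, for contradiction, that $G^\circ=\{x\}$. I would then encode the failure of every other vertex to be a nucleus as a linear-algebraic structure over $\mathrm{GF}(2)$ and derive a contradiction with the absence of isolated vertices.

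\textbf{Step 1 (obstructions).} A vertex $y$ is not a nucleus exactly when $G-y$ has false twins $a,b$. Since $G$ itself is PD, this happens iff $N_G(a)\,\triangle\,N_G(b)=\{y\}$. Such $a,b$ are non-adjacent: if $a\sim b$, then $a\in N(b)\setminus N(a)$, which would force $y=a$, a vertex of $G-y$. Also $y\notin\{a,b\}$. Regard neighborhoods as characteristic vectors in $\mathrm{GF}(2)^{V}$. For each $y\neq x$, fix one such pair and call it $e_y=ab$, with label $y$. This gives $n-1$ pairs with distinct labels.

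\textbf{Step 2 (the pairs form a spanning tree).} Let $T$ be the graph on $V(G)$ with edge set $\{e_y: y\neq x\}$. Suppose $T$ had a cycle. Summing the differences $N(a)+N(b)=\mathbf e_y$ around the cycle gives $0$. But the labels on the cycle are distinct, so this sum is a sum of distinct unit vectors and cannot be $0$. Hence $T$ is acyclic. It has $n-1$ edges on $n$ vertices, so $T$ is a spanning tree.

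\textbf{Step 3 (neighborhoods as cut sides).} Move along $T$. Membership of a vertex $y\ne x$ in $N(v)$ changes only when the path crosses $e_y$, and $N(a)\ni y\notin N(b)$ for $e_y=ab$. I want to conclude that $N(y)$ is exactly the component $S_y$ of $T-e_y$ containing $a$. This holds provided some vertex on the $b$-side is non-adjacent to $y$, which is automatic because $b$ itself is on that side. For $x$, membership never changes along $T$, so $x\in N(v)$ either for all $v$ or for none. The first case is impossible since $x\notin N(x)$, so $x$ is isolated. This contradicts the hypothesis.

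The main obstacle is making Step 3 airtight. Concretely, I must verify that the constancy of the $x$-coordinate along $T$ really follows from the fact that no edge of $T$ carries label $x$. If this is not exactly how the structure behaves, I would fall back on a second route. There I would use the symmetry $z\in N(y)\iff y\in N(z)$ together with $N(y)=S_y$ for all $y\neq x$, and pick a leaf of $T$ to reach a contradiction. This is the same mechanism that, in the preceding proposition, forces an isolated vertex when there is a unique nucleus.
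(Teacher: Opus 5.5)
Your argument is correct. The paper gives no proof of this statement: it imports Sumner's Corollary 1 directly, so your route is necessarily different from anything in the text.

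What you have is essentially the argument behind Bondy's theorem on induced subsets, applied to the family of neighborhoods. It runs as follows:
\begin{enumerate}
\item The labelled pairs $e_y$ are pairwise distinct, since $N(a)\triangle N(b)$ determines the label.
\item A cycle in $T$ would force a sum of distinct unit vectors to vanish, so $T$ is a forest.
\item A forest with $n-1$ edges on $n$ vertices is a spanning tree.
\end{enumerate}

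The hesitation in your last paragraph is unfounded. For every edge $e_z=ab$ of $T$ you have $N(a)\triangle N(b)=\{z\}$ with $z\neq x$, hence $x\in N(a)\iff x\in N(b)$. Connectivity of $T$ then makes the $x$-coordinate constant, and $x\notin N(x)$ forces $N(x)=\emptyset$. Neither the description $N(y)=S_y$ nor the leaf-based fallback is needed.

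Compared with citing Sumner, your approach gives two extras for free:
\begin{itemize}
\item The same counting reproves Theorem~\ref{nucleus}. If $G^\circ=\emptyset$, you would get $n$ distinct labelled edges on $n$ vertices, which forces a cycle.
\item It shows directly that a unique nucleus must itself be the isolated vertex. That is exactly the fact invoked at the start of the proof of Proposition~\ref{G0=1}.
\end{itemize}
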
	
					
\begin{lemma}\label{birooni}
	\cite[Lemma 6 and Lemma 7]{sumner1973} Let $G$ be a point determining graph and $a , b, c \in G$ are distinct vertices with $N_G(a)=N_G(b)-\{c\}$. Then the following statements hold.
	\begin{itemize}
		\item[(i)] If $a \not\in G^\circ$, then there exists $d \in G-\{a,c\}$ with $ N_G(c)=N_G(d)-\{a\}$.
		\item[(ii)] If $b \not\in G^\circ$, then there exists $d \in G-\{b,c\}$ with $ N_G(d)=N_G(c)-\{b\}$.
	\end{itemize}
\end{lemma}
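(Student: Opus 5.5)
Both parts follow from one observation: if a vertex $v$ is not a nucleus vertex, then $G-v$ has two distinct vertices $x,y$ with equal neighborhoods in $G-v$. Since $G$ itself is PD, $N_G(x)\neq N_G(y)$, so the two neighborhoods differ exactly in $v$. Hence, after renaming, $v\in N_G(x)$ and $N_G(y)=N_G(x)-\{v\}$. The plan is to show that the hypothesis $N_G(a)=N_G(b)-\{c\}$ forces such a pair $x,y$ to be the required $c,d$.

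First I record the elementary consequences of the hypothesis. Since $N_G(a)\subsetneq N_G(b)$, we have $c\in N_G(b)$ and $c\notin N_G(a)$. Also $a,b$ are non-adjacent: if they were adjacent, then $b\in N_G(a)\subseteq N_G(b)$, which is impossible since there are no loops.

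For (i), apply the observation with $v=a$. This gives $x,y\neq a$ with $a\in N_G(x)$ and $N_G(y)=N_G(x)-\{a\}$, and I claim $y=c$.
\begin{itemize}
\item Since $x\in N_G(a)\subseteq N_G(b)$, $x$ is adjacent to $b$.
\item Suppose $y=b$. Then $x\in N_G(b)=N_G(x)-\{a\}$, so $x$ is adjacent to itself, which is absurd. So $y\neq b$.
\item Since $b\in N_G(x)$ and $b\neq a$, we get $b\in N_G(y)$, i.e. $y\in N_G(b)$.
\item Suppose $y\neq c$. Then $y\in N_G(b)-\{c\}=N_G(a)$, so $a\in N_G(y)$, contradicting $N_G(y)=N_G(x)-\{a\}$.
\end{itemize}
Thus $y=c$, and $d:=x$ satisfies $N_G(c)=N_G(d)-\{a\}$. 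Moreover $d\neq a$ (as $x\in G-a$) and $d\neq c$ (as $x\neq y$).

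For (ii), apply the observation with $v=b$. This gives $x,y\neq b$ with $b\in N_G(x)$ and $N_G(y)=N_G(x)-\{b\}$, and now I claim $x=c$. Suppose $x\neq c$. Since $x\in N_G(b)-\{c\}=N_G(a)$, we get $a\in N_G(x)$. As $a\neq b$, this gives $a\in N_G(y)$, so $y\in N_G(a)\subseteq N_G(b)$, i.e. $b\in N_G(y)$, which is a contradiction. Hence $x=c$, and $d:=y$ satisfies $N_G(d)=N_G(c)-\{b\}$, with $d\neq b,c$.

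The only delicate points are excluding the degenerate identifications: $y=b$ in (i), and $d\in\{b,c\}$ in (ii). The no-loops argument and the fact that $x,y$ are distinct vertices of $G-v$ handle these.
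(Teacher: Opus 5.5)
Your proof is correct. The opening observation holds: deleting a non-nucleus vertex $v$ of a PD graph gives $x\neq y$ in $G-v$ with $N_G(y)=N_G(x)-\{v\}$. The neighborhood chasing that forces $y=c$ in (i) and $x=c$ in (ii) works as written, and so do the checks $d\notin\{a,c\}$ and $d\notin\{b,c\}$.

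The paper gives no proof of this lemma; it quotes it from Sumner, so there is no in-text argument to compare against. Your write-up is a self-contained replacement for that citation. Its key device is the same one the paper uses repeatedly in its own arguments, for instance in the proofs of Proposition~\ref{G0=1} and Lemma~\ref{nKH}, and it is formalized in Equation~(\ref{hamsaye}).

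Two harmless redundancies:
\begin{itemize}
\item In (i), the bullet ruling out $y=b$ is subsumed by the next one, since $b\in N_G(y)$ already forces $y\neq b$.
\item The facts that $a,b$ are non-adjacent and that $c\in N_G(b)$ are never used. The only inputs you need are $N_G(b)-\{c\}=N_G(a)\subseteq N_G(b)$ and, for the opening observation, the PD property of $G$.
\end{itemize}
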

\begin{lemma}\label{notejtema}
Let $G$ be a disconnected point determining graph with non-trivial components. Then $G^\circ = \bigcup_{H} H^\circ$, where $H$ runs over all connected components of $G$. \hfill$\square$
\end{lemma}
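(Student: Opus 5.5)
Fix a vertex $x$ in a component $H$ of $G$. The aim is to show that $G-x$ is point determining exactly when $H-x$ is point determining. Once this equivalence holds, $x\in G^\circ$ if and only if $x\in H^\circ$, which is the claimed equality $G^\circ=\bigcup_H H^\circ$.

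First we record the basic structure. Each component $H$ is point determining, since its neighborhoods are the same as in $G$. Each $H$ also has at least two vertices by assumption. Since $H$ is connected, no vertex of $H$ is isolated, and in particular no vertex of $H$ is isolated in $G$.

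\emph{Direction ($\Leftarrow$).} Suppose $H-x$ is point determining. We check that no two vertices $y\neq z$ of $G-x$ share a neighborhood in $G-x$, splitting into three cases.
\begin{itemize}
\item[(a)] If $y,z$ lie in a component $H'\neq H$, their neighborhoods are unchanged by deleting $x$. They differ because $G$ is point determining.
\item[(b)] If $y,z$ both lie in $H-x$, they differ because $H-x$ is point determining.
\item[(c)] If $y\in H'$ and $z\in H''$ with $H'\neq H''$, the neighborhoods lie inside $H'$ and $H''$ respectively. These are disjoint sets, so equality forces both neighborhoods to be empty in $G-x$.
\end{itemize}
In case (c) at least one of $y,z$, say $y$, lies outside $H$. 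Then $N_{G-x}(y)=N_G(y)$, which is nonempty because $H'$ is nontrivial and connected. This is a contradiction.

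\emph{Direction ($\Rightarrow$).} Suppose $G-x$ is point determining. Any two vertices of $H-x$ have the same neighborhoods in $H-x$ as in $G-x$, so they are distinguished there. Hence $H-x$ is point determining.

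The main subtlety is case (c): two vertices from different pieces could both become isolated after deleting $x$. This is exactly where the hypothesis of non-trivial components is used. It ensures that at most one isolated vertex of $G-x$ can arise, namely inside $H-x$, since all other components keep their nonempty neighborhoods. (With a trivial component present, a vertex of $H$ that becomes isolated would be a false twin of the $K_1$, which is why the lemma requires non-trivial components.)
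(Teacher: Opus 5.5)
Your proof is correct. The paper states this lemma without proof, treating it as immediate, and your argument supplies exactly the omitted verification that, for $x\in H$, $G-x$ is point determining if and only if $H-x$ is; in particular, you correctly locate the one place the non-trivial-components hypothesis is needed, namely ruling out a false-twin pair of isolated vertices lying in different components of $G-x$.
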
																																												
\begin{prop}\label{G0=1}
	Let $G$ be a point determining graph with $G^\circ=\{x_0\}$. Then there exist positive integers $k_1,\cdots,k_m$ such that $G=\mathcal{A}_{k_1}+\mathcal{A}_{k_2}+\cdots+\mathcal{A}_{k_m}+x_0$.
\end{prop}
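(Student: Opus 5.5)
The plan is to first identify $x_0$ as an isolated vertex, and then to build the components of $G-x_0$ out of chains of vertices produced by Lemma \ref{birooni}. If $G$ is trivial we take $m=0$, so assume $G$ is nontrivial.

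\textbf{Step 1: $x_0$ is the isolated vertex.} Since $|G^\circ|=1$, Proposition \ref{noisole} gives an isolated vertex $z$, which is unique because $G$ is PD. Deleting $z$ cannot create false twins: two vertices of $G-z$ with equal neighborhoods there would already differ in $G$ only by $z$, which is impossible since $z$ has no neighbors. Hence $z\in G^\circ$, so $z=x_0$.

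\textbf{Step 2: the basic dichotomy.} Put $H=G-x_0$. Then $H$ is PD with no isolated vertices, and all its components are nontrivial. Let $y\in V(H)$. Since $y\notin G^\circ$, $G-y$ has false twins $a,b$, which we may label so that $N_G(a)=N_G(b)-\{y\}$. There are two possibilities.
\begin{enumerate}[(a)]
\item $a=x_0$. Then $N_G(b)=\{y\}$, so $y$ has a pendant neighbor $b$.
\item $a,b\in H$. Then $y\notin H^\circ$.
\end{enumerate}
Now apply this to a component $C$ of $H$. By Lemma \ref{notejtema} and Proposition \ref{noisole}, $|C^\circ|\ge 2$. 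Every $y\in C^\circ$ falls in case (a), so it has a pendant neighbor.

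\textbf{Step 3: building the chain.} Start from such a pair: $y\in C^\circ$ with leaf neighbor $b$, so that $N_G(x_0)=N_G(b)-\{y\}$. Since $b\notin G^\circ$, Lemma \ref{birooni}(ii), applied with $a=x_0$ and $c=y$, gives $d_1$ with $N(d_1)=N(y)-\{b\}$. Now alternate:
\begin{itemize}
\item whenever the current equation has the form $N(\alpha)=N(\beta)-\{\gamma\}$ with $\alpha\notin G^\circ$, use Lemma \ref{birooni}(i);
\item whenever $\beta\notin G^\circ$, use Lemma \ref{birooni}(ii).
\end{itemize}
This produces sequences $v_1=b, v_2,\dots$ and $u_1, u_2,\dots$ (with $y$ among the $u$'s at the top) satisfying exactly the relations (\AA1) and (\AA2). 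From these relations:
\begin{itemize}
\item the neighborhoods are strictly nested, so the vertices are pairwise distinct;
\item the $u$'s are pairwise nonadjacent, and likewise the $v$'s;
\item $u_i\sim v_j$ if and only if $i\ge j$.
\end{itemize}
By finiteness the chain must stop. It can only stop when the next application of Lemma \ref{birooni} is blocked, that is, when the relevant vertex is $x_0$. Stopping there forces the neighborhood of the last $v$ to be a single $u$ (the one we started from), which closes the chain off as a copy of $\mathcal{A}_k$.

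\textbf{Step 4: the chain is a whole component.} Finally we show that $S=\{u_i,v_j\}$ is all of $C$. The nesting relations, which are equalities of full neighborhoods in $G$, determine $N(v_1)$ and $N(u_k)$ inside $S$, and hence by induction every $N(u_i)$ and $N(v_j)$ lies in $S$. So $S$ is closed under neighborhoods and therefore equals $C$. Doing this for every component gives $G=\mathcal{A}_{k_1}+\cdots+\mathcal{A}_{k_m}+x_0$.

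\textbf{Main obstacle.} The delicate step is Step 3: making the alternation rigorous. One has to show that the vertex $d$ supplied by the lemma is always new and is never $x_0$ prematurely, and that the end condition really yields $\mathcal{A}_k$. My first approach is to maintain, by induction along the chain, the invariant that the vertices found so far induce $\mathcal{A}_j$ together with the stated neighborhood equalities, using the strict nesting of neighborhoods to rule out repetitions.
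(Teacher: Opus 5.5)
Your Steps 1 and 2 are correct. The route you take afterwards is also viable and differs from the paper's: you push along the component with Lemma~\ref{birooni}, which amounts to extending $(x_0,y,b,\dots)$ as an $\Omega$-sequence in the sense of Lemma~\ref{sequence-existence}. The paper instead uses only the bare definition of a non-nucleus vertex of $H_1$, and then has to identify the partner vertices by the degree condition (iii) and the pendant-neighbour property.

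However, Step~3 is the whole proof, you leave it as a plan, and several of the things you do state about it are wrong:
\begin{itemize}
\item Since $b$ is pendant, $b=v_1$ contradicts (\AA1): $v_1$ has the largest neighbourhood among the $v$'s. The pendant neighbour of $y=u_k$ must be $v_k$. Because $k$ is only known at the end, you have to index by order of discovery and relabel afterwards.
\item The stopping condition is misdescribed. Producing $x_0$ forces the \emph{last $u$ found} to be pendant on the last $v$ found. The condition $N(v_k)=\{u_k\}$ is your \emph{starting} condition, and it is these two pendant ends together that pin down every neighbourhood.
\item Strict nesting only separates vertices of the same type; it does not separate a $u$ from a $v$.
\item Your rule must be applied at the newest vertex. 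For example, from $N(a_3)=N(a_1)-\{a_2\}$, part (ii) applied at $a_1$ just returns $x_0$.
\end{itemize}

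The missing observation, which disposes of all of this and of Step~4, is that $N(x_0)=\emptyset$ makes the relations telescope. Put $a_0=x_0$, $a_1=y$, $a_2=b$, and obtain $a_{m+1}$ from Lemma~\ref{birooni} at $a_m$, using (ii) for even $m$ and (i) for odd $m$. Then, as in Remark~\ref{fasele},
\[ N(a_{2j})=\{a_1,a_3,\dots,a_{2j-1}\},\qquad N(a_{2j+1})=N(a_1)-\{a_2,a_4,\dots,a_{2j}\}. \]
Several things follow:
\begin{itemize}
\item Each $a_{2j}$ with $j\ge 1$ is adjacent to $a_1$, and no $a_{2j+1}$ is.
\item Both families are strictly nested.
\item The only possible repetition is $a_{2k+1}=x_0$, which happens exactly when $N(a_{2k-1})=\{a_{2k}\}$. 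So there is nothing ``premature'' to exclude.
\end{itemize}
By finiteness this occurs, and then $N(a_1)=\{a_2,a_4,\dots,a_{2k}\}$. All neighbourhoods are now explicit and lie in $S=\{a_1,\dots,a_{2k}\}$. Setting $u_i=a_{2(k-i)+1}$ and $v_i=a_{2(k-i)+2}$, you read off $S\cong\mathcal{A}_k$ with $S$ closed under neighbourhoods, hence $S=C$.

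With this inserted your argument is complete. It is somewhat shorter than the paper's, because Lemma~\ref{birooni} hands you each partner vertex already labelled, rather than having to identify it by degree counting.
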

\begin{proof}
	By Proposition \ref{noisole}, if $|G^\circ|=1$ and $G$ is PD, then the unique isolated vertex of $G$ is $x_0$. Therefore, we may assume that $G= H+ K_1$ where $V(K_1)=\{x_0\}$ and $H$ is PD with no isolated vertices. Let $x \in H^\circ$. Since $G-x$ is not PD, then there exist vertices $p,q\in G-x$ such that $N_G(p)=N_G(q)-\{x\}$. Both $p$ and $q$ cannot be in $H$ at the same time, because $x \in H^\circ$. It follows that $p=x_0$, which means $q$ is a pendant vertex adjacent to $x$. So,
	\begin{itemize}
		\item[($*$)]  every vertex which is nucleus in $H$, is adjacent to a pendant vertex in $H$.
	\end{itemize}

	Let $H_1$ be a connected component of $H$ and $v_1 \in H_1^\circ$. By Lemma \ref{notejtema},  $v_1 \in H^\circ$. By ($*$), $v_1$ is adjacent to a pendant vertex $u_1$. If $u_1 \in H_1^\circ$, by Lemma \ref{notejtema}, $u_1 \in H^\circ$ . By ($*$), we deduce that $u_1$ has a pendant neighbor. Consequently, $H_1= K_2=\mathcal{A}_1$. If $u_1 \not\in H_1^\circ$, then there exist $v_2 , a \in G-u_1$ such that $N_G(v_2)=N_G(a)-\{u_1\}$. If $H_1=K_2$ then $u_1 \in H_1^\circ$, which is a contradiction. Therefore, $H_1\neq K_2$ and $v_2\neq x_0$. Since $u_1$ is pendant in $G$, we have $a=v_1$. Since $N_G(v_{2})=N_G(v_1)-\{u_1\}$, the degree of any vertex adjacent to $v_{2}$ is at least two. By ($*$) and Lemma \ref{notejtema}, we conclude that $v_{2} \notin H_1^\circ$. Therefore, there exist vertices $b , u_2 \in G-v_2$ such that $N_G(b) =N_G(u_2)-\{v_2\}$. Since $N_G(v_2)=N_G(v_1)-\{u_1\}$ and $u_2$ is adjacent to $v_2$, the vertex $u_2$ is adjacent to $v_1$. So, $b$ is adjacent to $v_1$ and not to $v_2$. Hence, $b=u_1$, $N_G(u_1) =N_G(u_2)-\{v_2\}$, $\deg_G(u_2)=2$ and $\mathcal{A}_2$ is an induced subgraph of $H_1$.

	Generally, for $i\geq 2$, suppose $\mathcal{A}_i$ is an induced subgraph of $H_1$, where $V(\mathcal{A}_i)=\{u_1, \ldots, u_i , v_1 , \ldots, v_i\}$ and
	\begin{itemize}
		\item[(i)] $N_{G}(v_k )=N_{G}(v_{k-1} )-\{u_{k-1}\}$, for $2\leq k\leq i$,
		\item[(ii)] $N_{G}(u_k )=N_{G}(u_{k+1} )-\{v_{k+1}\}$, for $1 \leq k \leq i-1$,
		\item[(iii)] $\deg_G(u_k)=k$, for $1 \leq k \leq i$.
	\end{itemize}

	If $u_{i}\in H_1^\circ$, we have $u_{i} \in H^\circ$ by Lemma \ref{notejtema}. By ($*$), $u_{i}$ is adjacent to a pendant vertex in $H_1$. We have $N_G(u_{i})=\{v_1 , \dots, v_{i}\}$ and $\{u_t, \dots , u_{i}\} \subseteq N_{H_1}(v_t)$ for $1 \leq t\leq i$. Then the only valid candidate for this pendant vertex is $v_{i}$. Summing up, $v_{i}$ is a pendant vertex adjacent to $u_{i}$ and $H_1 =\mathcal{A}_{i}$.

If $u_i \notin H_1^\circ$, then, there exist vertices $ v_{i+1},v \in H_1-u_i$ such that $N_{H_1}(v_{i+1})=N_{H_1}(v)-\{u_i\}$. Since $H_1$ is a connected component of $G$, hence $N_{G}(v_{i+1})=N_{G}(v)-\{u_i\}$.  By (ii) and (iii), $N_{G}(u_i)=\{v_1 , \dots, v_i\}$. So, $v=v_j$ for some $j$, $1 \leq j \leq i$.  For $ 1\leq t\leq i$, we know that $N_{\mathcal{A}_i}(v_t)=\{u_t , \dots, u_i\}$, thus $v_{i+1}\neq v_t$. Also, $N_{\mathcal{A}_i}(u_t)=\{v_1 , \ldots, v_t\}$ for $ 1\leq t\leq i$. If $j\neq i$, then $N_G(v_{i+1})=N_G(v_j) -\{u_i\}$ for some $j$, $1\leq j<i$. Thus, $\{v_1 , \dots ,v_j, v_{i+1}\} \subseteq N_{G}(u_j)$ and consequently $|N_{G}(u_j)| > j$, which contradicts (iii). Therefore, $j=i$ and $v=v_i$ (statement (i) for $k=i+1$). Using $N_G(v_{i+1})=N_G(v_i)-\{u_i\}$ and (i), inductively, we have $N_G(v_{i+1})=N_G(v_1)-\{u_1, \ldots, u_i\}$. Thus, $v_{i+1} \notin \{u_1, \ldots, u_n\}$. Since $N_G(v_{i+1})=N_G(v_i)-\{u_i\}$, the degree of any vertex adjacent to $v_{i+1}$ is at least two. By ($*$) and Lemma \ref{notejtema}, we conclude that $v_{i+1} \notin H_1^\circ$. Therefore, there exist vertices $s , u_{i+1} \in G-v_{i+1}$ such that $N_G(s) =N_G(u_{i+1})-\{v_{i+1}\}$. Since $N_G(v_{i+1})=N_G(v_i)-\{u_i\}$ and $u_{i+1}$ is adjacent to $v_{i+1}$, $u_{i+1}$ is adjacent to $v_i$. So, $s$ is adjacent to $v_i$ and not to $v_{i+1}$. Hence, $s=u_i$, $N_G(u_i) =N_G(u_{i+1})-\{v_{i+1}\}$ (statement (ii) for $k=i$), $\deg_G(u_{i+1})=i+1$ (statement (iii) for $k=i+1$) and $\mathcal{A}_{i+1}$ is an induced subgraph of $H_1$.

Continuing this way, we conclude that $H_1=\mathcal{A}_n$ for some $n\geq1$ and the proposition follows.  
\end{proof}
Now we study connected PD graphs $G$ with $|G^\circ |=2$. For a graph $G$ and $z \in V(G)$ we define the subgraph $N_z$
to be the induced subgraph by $N_G(z)$ in $G$. We need a theorem from \cite{sumner1973}.

\begin{theorem}\label{xya}
	\cite[Theorem 6 and Corollary 2]{sumner1973} If G is a connected, point determining graph with $G^\circ = \{x,y\}$, then $x$ and $y$ are adjacent and every vertex of $G$ is adjacent to exactly one of $x$ and $y$.
\end{theorem}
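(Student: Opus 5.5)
The plan is to exploit the ``separating pair'' structure that every non-nucleus vertex carries, and to iterate Lemma \ref{birooni} to build chains of vertices resembling the graphs $\mathcal{A}_n$, whose two ends must be nucleus vertices.

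\textbf{Step 1: separating pairs.} Since $G$ is connected and PD with at least two vertices, it has no isolated vertex. Let $v\in V(G)-\{x,y\}$. Then $G-v$ is not PD, so there are distinct $a,b\in G-v$ with $N_{G-v}(a)=N_{G-v}(b)$. Because $G$ is PD, $v$ must distinguish them; after renaming, $N_G(a)=N_G(b)-\{v\}$, with $v\in N_G(b)$ and $v\notin N_G(a)$. This is exactly the hypothesis of Lemma \ref{birooni} with $c=v$.

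\textbf{Step 2: chains.} Starting from such a triple, apply Lemma \ref{birooni} repeatedly in both directions. If $a\notin G^\circ$, part (i) gives $d$ with $N_G(v)=N_G(d)-\{a\}$, a new relation of the same shape in which the roles have shifted. If $b\notin G^\circ$, part (ii) extends the chain on the other side. Iterating produces sequences $u_1,u_2,\dots$ and $v_1,v_2,\dots$ satisfying relations like (\AA1) and (\AA2), namely
\[
N_G(v_{k})=N_G(v_{k-1})-\{u_{k-1}\},\qquad N_G(u_k)=N_G(u_{k+1})-\{v_{k+1}\}.
\]
This is the same bookkeeping as in the proof of Proposition \ref{G0=1}. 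The neighbourhoods are strictly nested, so each family consists of distinct vertices, as in (\AA1) and (\AA2). Finiteness of $G$ then forces the chain to stop at both ends. By Lemma \ref{birooni}, the chain can stop only at a nucleus vertex. Hence each end of every chain lies in $G^\circ=\{x,y\}$.

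\textbf{Step 3: the two ends are $x$ and $y$, and they are adjacent.} I must rule out a chain whose two ends are the same nucleus vertex. The nested neighbourhoods force the two ends to lie on ``opposite sides'', just as $v_1$ and $u_n$ do in $\mathcal{A}_n$. So one end is $x$ and the other is $y$. The nesting also forces the two ends to be adjacent, again as $u_n v_1$ is an edge of $\mathcal{A}_n$. This gives $xy\in E(G)$.

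\textbf{Step 4: every vertex sees exactly one of $x,y$.}
\begin{itemize}
\item \emph{Not both.} By Step 2 each vertex $w\ne x,y$ lies in a chain. Tracking the nested neighbourhoods along the chain, $w$ is adjacent to exactly one of the two ends. For example, in $\mathcal{A}_n$ each $u_i$ sees $v_1$ but not $u_n$, and each $v_j$ sees $u_n$ but not $v_1$. The same pattern holds in $G$.
\item \emph{At least one.} If some vertex $w\ne x,y$ were adjacent to neither $x$ nor $y$, then $x$ and $y$ would not be separated by $w$. Comparing $N_G(x)$ and $N_G(y)$ along the chain through $w$ should then yield a second pair of false twins, contradicting PD. I would use connectivity here to exclude such a vertex.
\end{itemize}

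\textbf{Main obstacle.} The hardest step is Step 2 together with its consistency for Step 3. The chains obtained from different starting vertices may overlap or interleave. I must show that the iterated applications of Lemma \ref{birooni} never revisit a vertex, and that the terminating nucleus vertices are forced to be distinct. I would first try to mimic the degree and neighbourhood-containment counting from the proof of Proposition \ref{G0=1}. There, condition (iii) on degrees is what rules out backtracking.
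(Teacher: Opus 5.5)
The paper gives no proof of this statement; it is quoted from \cite{sumner1973}. Your proposal therefore has to stand on its own, and it does not. \textbf{Step 3 asks you to prove something false.} Chains whose two ends coincide do occur under the hypotheses of the theorem. Let $G$ be the paw: a triangle $yst$ with a pendant vertex $x$ attached to $y$. It is connected and point determining. Both $G-x\simeq K_3$ and $G-y\simeq K_1+K_2$ are point determining, while $G-s$ and $G-t$ are copies of $P_3$, so $G^\circ=\{x,y\}$. The only relations with middle vertex $s$ or $t$ are
\[
N_G(x)=N_G(t)-\{s\}\quad\text{and}\quad N_G(x)=N_G(s)-\{t\}.
\]
So the unique saturated chain through $s$ is $(x,s,t,x)$. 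Both of its ends are $x$, and $s$ is adjacent to neither end of its chain; its neighbour $y$ is not on the chain. Hence neither Step 3 nor the ``not both'' half of Step 4 can be argued chain by chain.

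Nesting also does not make the ends adjacent. In $P_5=x_1x_2x_3x_4x_5$ one has $N_{P_5}(x_5)=N_{P_5}(x_3)-\{x_2\}$. This is a saturated chain of even length joining the non-adjacent nucleus vertices $x_3$ and $x_5$. Whether the ends of a chain are distinct, or adjacent, depends on its starting pattern (I) or (II) and on the parity of its length, and nesting controls neither. Finally, the ``at least one'' half of Step 4 (``should then yield a second pair of false twins'') is a hope, not an argument.

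What is missing is a \emph{global} argument that combines connectivity with $|G^\circ|=2$. Start from the fact that every non-nucleus vertex lies in a saturated chain with ends in $\{x,y\}$; this is Lemma \ref{sequence-existence}. Note that ruling out revisits needs more than nesting within each parity class: you also need an adjacency argument separating even-indexed from odd-indexed vertices. Then use Remark \ref{fasele} to sort the chains into three kinds.
\begin{itemize}
\item[(a)] Odd length, pattern (I): the ends are distinct and adjacent.
\item[(b)] Odd length, pattern (II): each internal $a_i$ satisfies $N_G(a_i)=N_G(e)\cup T$ for an end $e$ and a set $T$ of internal, hence non-nucleus, vertices. If the ends are distinct, they are non-adjacent.
\item[(c)] Even length: the ends are distinct and one has a strictly smaller neighbourhood than the other. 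So they are non-adjacent, and only the end with the larger neighbourhood has internal neighbours.
\end{itemize}

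Now suppose $x\not\sim y$. By connectivity $x$ has a neighbour $w$. It cannot be internal to a chain of kind (a), since such a chain would force $x\sim y$. It cannot be internal to a chain of kind (b), since that would need $x\in N_G(e)\cup T$. So $w$ is internal to a chain of kind (c), which forces $N_G(y)\subsetneq N_G(x)$. Symmetrically, a neighbour of $y$ forces $N_G(x)\subsetneq N_G(y)$, a contradiction. Hence $x\sim y$.

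This rules out kind (c) and kind (b) with distinct ends. The surviving chains are of kind (a) or are belts, as in the paw. For these, the formulas show directly that each internal vertex is adjacent to exactly one of $x$ and $y$. You cannot shortcut any of this with the paper's later Lemma \ref{kodd}, because its part (iii) itself invokes Theorem \ref{xya}.
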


\begin{lemma} \label{nKH}
	Let $G$ be a connected point determining graph with $G^\circ = \{x,y\}$. Then there exists $\alpha \geq 1$ such that either $N_x=\alpha K_1$ or $N_x = \alpha K_1 +H$ for some point determining graph $H$.
\end{lemma}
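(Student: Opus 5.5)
The plan is to use Theorem \ref{xya} to understand exactly where the neighbours of the vertices of $N_G(x)$ lie. I would then split the induced subgraph $N_x$ into its isolated vertices and the rest, and show two things: there is at least one isolated vertex, and the rest is point determining.

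\emph{Step 1 (structure and the isolated vertex).} By Theorem \ref{xya}, $x$ and $y$ are adjacent and every vertex of $G$ is adjacent to exactly one of $x,y$. So $V(G)=N_G(x)\,\dot\cup\,N_G(y)$, with $y\in N_G(x)$ and $x\in N_G(y)$. Since no vertex is adjacent to both $x$ and $y$, the vertex $y$ has no neighbour inside $N_G(x)$. Hence $y$ is an isolated vertex of $N_x$. Letting $\alpha\ge 1$ be the number of isolated vertices of $N_x$, we get $N_x=\alpha K_1+H$, where $H$ is the (possibly empty) subgraph induced by the non-isolated vertices of $N_x$. If $H$ is empty we are in the case $N_x=\alpha K_1$.

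\emph{Step 2 ($H$ is PD).} Suppose $a,b\in V(H)$ are distinct with $N_H(a)=N_H(b)$. Since both are non-isolated in $N_x$, they have the same nonempty set of neighbours inside $N_G(x)$. Both are adjacent to $x$ and not to $y$, and since $G$ is PD they must differ on some vertex $w\in N_G(y)-\{x\}$. The aim is to contradict the hypothesis that $G^\circ=\{x,y\}$. The first thing I would try is to feed this near-twin configuration into Lemma \ref{birooni}. Specifically, I would pick $a,b$ and $w$ so that $N_G(a)=N_G(b)-\{w\}$ holds exactly; if necessary I would pass to the pair of twins in $H$ whose neighbourhoods in $N_G(y)$ differ minimally. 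Then I would use that $a,b\notin G^\circ$ (they are neither $x$ nor $y$) to generate, via parts (i) and (ii) of Lemma \ref{birooni}, a chain of vertices alternating between $N_G(x)$ and $N_G(y)$, with each new vertex obtained by removing one vertex from the neighbourhood of the previous one. This is the same mechanism that produced the $\mathcal{A}_n$ structure in the proof of Proposition \ref{G0=1}. Since the graph is finite, the chain must stop, and it can only stop at a nucleus vertex. The contradiction would come from showing that the chain can stop neither at $x$ nor at $y$: $x$ and $y$ are adjacent to every vertex of the opposite side, which is incompatible with the "one-vertex-removed" neighbourhood relations forced along the chain.

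\emph{Main obstacle.} Step 2 is where the difficulty lies. Differing on a single vertex $w$ is not automatic, and there is no obvious contradiction from $a,b$ having different neighbourhoods in $N_G(y)$. If the chain argument does not close, my fallback is induction on $|V(G)|$. I would delete a suitable vertex of $N_G(y)-\{x\}$ that separates $a$ from $b$, and use Theorem \ref{nucleus} together with Lemma \ref{notejtema} to stay within connected PD graphs with two nucleus vertices. Then I would transfer the twin pair $a,b$ to the smaller graph and contradict the induction hypothesis.
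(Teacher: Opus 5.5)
\textbf{There is a genuine gap: your Step 2 is not an argument.} Step 1 matches the paper's opening: by Theorem~\ref{xya}, $y$ is isolated in $N_x$. But Step 2 is the whole content of the lemma, and each route you sketch breaks at a specific point.

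(a) Nothing lets you choose false twins $a,b$ of $H$ with $N_G(a)=N_G(b)-\{w\}$. A pair minimizing $|N_G(a)\triangle N_G(b)|$ may still have some $w_1\in N_G(a)-N_G(b)$ and some $w_2\in N_G(b)-N_G(a)$. So the input you want to give Lemma~\ref{birooni} need not exist.

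(b) Even granting that input, the reason you give for the contradiction is false. The chains generated by Lemma~\ref{birooni} are the paper's $\Omega$-sequences, and they do end at $x$ and $y$ (Lemma~\ref{sequence-existence}). Take the path $P_4$ with vertices $1,2,3,4$ in order. Then $G^\circ=\{2,3\}$, and $N_G(4)=N_G(2)-\{1\}$, $N_G(1)=N_G(3)-\{4\}$ form a chain running from $x=2$ to $y=3$. So the fact that $x$ and $y$ are adjacent to every vertex of the opposite side creates no incompatibility on its own. You never say where the twin hypothesis would enter.

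(c) The fallback induction cannot start. Any $w\in N_G(y)-\{x\}$ is not in $G^\circ=\{x,y\}$, so $G-w$ is not point determining, by the very definition of $G^\circ$. There is therefore no smaller graph of the same kind to which the induction hypothesis applies.

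\textbf{The missing idea} is to let a twin play the role of the \emph{deleted} vertex of a triple, combined with an extremal choice.

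Among the vertices of $H$ that have a false twin in $H$, take $z$ with $\deg_G(z)$ maximum. Since $z\notin\{x,y\}$, the card $G-z$ is not PD. Hence there are $p,q$ with $N_G(p)=N_G(q)-\{z\}$. Because $z\neq x$, either both $p,q$ lie in $N_G(x)$ or both lie in $N_G(y)$.

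If $p,q\in N_G(y)$, then $p\neq x$, since $x\sim z$ but $p\not\sim z$. Lemma~\ref{birooni}(i) then gives $r$ with $N_G(z)=N_G(r)-\{p\}$. It follows that $r\in N_G(x)$. Since $p\notin N_G(x)$, $r$ is a false twin of $z$ in $H$ with $\deg_G(r)=\deg_G(z)+1$, contradicting the maximality of $\deg_G(z)$.

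If $p,q\in N_G(x)$, let $z'$ be a false twin of $z$ in $H$. Since $q\sim z$, $q$ lies in $H$ and $q\in N_H(z)=N_H(z')$. Hence $z'\in N_G(q)-\{z\}=N_G(p)$. So $p\neq z'$ and $p$ is non-isolated in $N_x$. Then $N_H(z')=N_H(z)$ forces $p\sim z$, which is a contradiction.
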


\begin{proof}
	By Theorem \ref{xya}, $y \in N_x$ and no vertex in $N_x -\{{y}\}$ is adjacent to $y$. Therefore, $y$ is an isolated vertex of $N_x$, and we can assume $N_x = \alpha K_1 + H$ such that $H$ has no isolated vertices in $N_x$ , $\alpha \geq 1$.  If $H = \emptyset$, then there is nothing to prove. So, let $H \neq \emptyset$. In contrary, suppose that $H$ is not PD.  Among all vertices in $H$ that have false twins, consider  $z$ with largest degree in $G$. Since $z \in N_x$  and $y$ is an isolated vertex in $N_x$, $z\neq x ,y$. Thus, $G-z$ is not PD and there exist vertices $p,q \in G-z$ such that $N_G(p)=N_G(q)-\{z\}$. By Theorem \ref{xya}, $N_x$ and $N_y$ are disjoint and every vertex in $G$ is either adjacent to $x$ or $y$, but not both. Hence the vertices $p$ and $q$ both are either in $N_x$ or in $N_y$ . If $p,q \in N_y$, since $N_G(p)=N_G(q)-\{z\}$, we deduce that $p$ and $z$ are not adjacent. But we know that $z$ and $x$ are adjacent and it follows that $p \neq x$. By Lemma \ref{birooni}, there exists $r \in G-\{p , z\}$ such that $N_G(z)=N_G(r)-\{p\}$. Since $z \in N_x$, we have $r \in N_x$. Also, $N_H(z)=N_H(r)$, because $p \in N_y$. This means $r$ is a false twin of $z$ with $\deg_G(z)< \deg_G(r)$, which contradicts the choice of $z$. Therefore, $p,q \in N_x$. Restrict the equality $N_G(p)=N_G(q)-\{z\}$ to the subgraph $N_x$. Consider a false twin $z'$ of $z$ in $N_x$. Since $p$ is not adjacent to $z$, it is not adjacent to $z'$, too ; and since $q$ is adjacent to $z$, it is also adjacent to $z'$, which contradicts the equality $N_G(p)=N_G(q)-\{z\}$. Consequently, $H$ has no pair of false twins and is a PD graph.
\end{proof}

Using the same reasoning as in the proof of Proposition \ref{G0=1}, together with Lemma \ref{nKH}, we obtain the following result.

\begin{figure}[h]
	\begin{center}
		\begin{tikzpicture}[
			vertex/.style={circle, draw, fill=white, inner sep=2.5pt},
			blackvertex/.style={circle, draw, fill=black, inner sep=3pt},
			edge/.style={thick}
			]
			
			\node[blackvertex] (a) at (0,1) {};
			\node[blackvertex] (b) at (0,0) {};
			
			\node[vertex] (c) at (3,1) {};
			\node[vertex] (d) at (4.0,1) {};
			\node[vertex] (e) at (2.0,0) {};
			\node[vertex] (f) at (3,0) {};
			\node[vertex] (g) at (4.0,0) {};
			\node[vertex] (h) at (5,0) {};
			
			\draw[edge] (a)--(b);
			
			\draw[edge] (e)--(c);
			\draw[edge] (e)--(f);
			\draw[edge] (c)--(f);
			\draw[edge] (c)--(d);
			\draw[edge] (f)--(g);
			\draw[edge] (d)--(g);
			\draw[edge] (d)--(h);
			\draw[edge] (g)--(h);
			
			\draw[edge] (c)--(g);
			\draw[edge] (f)--(d);
			\draw[edge] (c)--(h);
			\draw[edge] (d)--(e);
			
			\draw[edge] (a) to[out=30,in=145] (d);
			\draw[edge] (a) to[out=30,in=145] (c);
			
			\draw[edge] (b) to[out=-30,in=-145] (g);
			\draw[edge] (b) to[out=-30,in=-145] (e);
			\draw[edge] (b) to[out=-30,in=-145] (h);
			\draw[edge] (b) to[out=-30,in=-145] (f);

		\end{tikzpicture}
		\caption{A graph with double nucleus}
	\end{center}
\end{figure}

\begin{lemma}\label{n1}
	Let $G$ be a connected point determining graph with $G^\circ = \{x,y\}$. Then there exists $\alpha \geq 1$ such that either $N_x = \alpha K_1$ or $N_x = \alpha K_1 +\mathcal{A}_{k_1}+\mathcal{A}_{k_2}+\cdots+\mathcal{A}_{k_m}$ for some positive integers $k_1,\ldots,k_m$.
\end{lemma}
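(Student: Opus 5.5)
The plan is to reduce the statement to Proposition \ref{G0=1}. By Lemma \ref{nKH}, either $N_x=\alpha K_1$, in which case we are done, or $N_x=\alpha K_1+H$ where $H$ is point determining and has no isolated vertices. Let $G'=H+K_1$, where the isolated vertex is $y$ (recall from the proof of Lemma \ref{nKH} that $y$ is isolated in $N_x$). Then $G'$ is PD, since $H$ is PD and we add only one isolated vertex. If we show $G'^\circ=\{y\}$, Proposition \ref{G0=1} gives $H=\mathcal{A}_{k_1}+\cdots+\mathcal{A}_{k_m}$, which is the claim. Since $y\in G'^\circ$ trivially, it suffices to show that no $z\in V(H)$ is a nucleus of $G'$. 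That is, for each such $z$, the graph $G'-z$ should contain two vertices with equal neighborhoods.

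Fix $z\in V(H)$. Then $z\neq x,y$, and $G^\circ=\{x,y\}$, so $G-z$ is not PD. Hence there are $p,q\in G-z$ with $N_G(p)=N_G(q)-\{z\}$. By Theorem \ref{xya}, every vertex is adjacent to exactly one of $x,y$, so $p$ and $q$ lie on the same side: both in $N_x$ or both in $N_y$. (Here $x$ itself is on the $y$-side.)

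Case $p,q\in N_x$. Restricting the equality to the induced subgraph $N_x$ gives $N_{N_x}(p)=N_{N_x}(q)-\{z\}$, and $q$ is adjacent to $z$.
\begin{itemize}
\item If $p,q\in H$, then $p,q$ are false twins in $H-z$, so $z\notin G'^\circ$.
\item If $p$ is one of the $\alpha$ isolated vertices of $N_x$, then $q$ is a pendant vertex of $H$ whose only neighbor is $z$. In $G'-z$, $q$ becomes a false twin of $y$, so again $z\notin G'^\circ$.
\item The case $q$ isolated in $N_x$ is impossible, since $q\sim z$ with $z\in N_x$.
\end{itemize}

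Case $p,q\in N_y$. We mimic the proof of Lemma \ref{nKH}. First, $p\neq y$: otherwise $q\in N_y$ gives $y\in N_G(q)-\{z\}=N_G(y)$, which is absurd. Also $p\neq x$, because $p\not\sim z$ while $x\sim z$. Hence $p\notin G^\circ$, and Lemma \ref{birooni}(i) yields $r\in G-\{p,z\}$ with $N_G(z)=N_G(r)-\{p\}$. Since $z\sim x$ and $p\neq x$, we get $r\sim x$, so $r\in N_x$. Because $p\in N_y$, this gives $N_{N_x}(z)=N_{N_x}(r)$. The vertex $r$ cannot be an isolated vertex of $N_x$, since then $z$ would be isolated in $H$. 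So $r\in H$ is a false twin of $z$ in $H$, contradicting that $H$ is PD. Thus this case cannot occur.

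The main obstacle is this second case, specifically checking the hypotheses of Lemma \ref{birooni} (that $p\notin G^\circ$, i.e. $p\neq x,y$) and ensuring that $r$ lands inside $H$ rather than among the isolated vertices of $N_x$; both are handled above. With $G'^\circ=\{y\}$ established, Proposition \ref{G0=1} finishes the proof.
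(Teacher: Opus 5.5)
Your proof is correct, but it takes a different route from the paper's in the second half.

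The first part of the paper's proof is essentially your case analysis. For $z\in H^\circ$ it takes $p,q$ with $N_G(p)=N_G(q)-\{z\}$ and rules out $p,q\in N_y$ using Lemma \ref{birooni} and the PD property of $H$, exactly as you do. It concludes that $z$ has a pendant neighbor $q$ in $H$ (property ($*$)).

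The paper does not then observe, as you do, that this says exactly that $H+K_1$ has a unique nucleus, so that Proposition \ref{G0=1} can be applied as a black box. Instead it re-runs the whole inductive construction of $\mathcal{A}_n$ from the proof of Proposition \ref{G0=1} inside $G$. At each step it again has to exclude the possibility that the relevant pair lies in $N_y$.

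Your reduction is much shorter and more modular, and it could be trimmed further. For $z\in V(H)\setminus H^\circ$, the graph $(H+K_1)-z$ is trivially non-PD, so only $z\in H^\circ$ needs the case analysis. Also, $p\neq y$ is automatic from $p\in N_y$.

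What the paper's longer route buys is information at the level of $G$ rather than $H$:
\begin{itemize}
\item It shows that the isolated vertex $p$ in ($*$) is actually $y$, i.e.\ $N_G(y)=N_G(q)-\{z\}$.
\item It establishes $N_G(v_i)=N_G(v_{i-1})-\{u_{i-1}\}$ and $N_G(u_i)=N_G(u_{i+1})-\{v_{i+1}\}$ with full $G$-neighborhoods.
\end{itemize}
These facts are extracted afterwards as Lemmas \ref{existence} and \ref{eqG} and are used in Corollary \ref{daraje} and later results. Your argument fully proves Lemma \ref{n1}, but it would need extra work to recover those $G$-level relations.
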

\begin{proof}
	By Lemma \ref{nKH}, there exists $\alpha \geq 1$ such that $N_x=\alpha K_1 +H$, where $H$ has no isolated vertices and is PD whenever $H\neq \emptyset$. If $H = \emptyset$, then there is nothing to prove. Let $H \neq \emptyset$ and $z \in H^\circ$. Then $z \neq y$, because $y \in \alpha K_1$, by Lemma \ref{nKH}. Hence $G-z$ is not PD. Therefore, there exist vertices $p , q \in G-z$ such that $N_G(p)=N_G(q)-\{z\}$. If both $p$ and $q$ belong to $N_y$, then $p\neq x$, because $z$ is adjacent to $x$. By Lemma \ref{birooni}, there exists a vertex $r \in G-\{p,z\}$ such that $N_G(z)=N_G(r)-\{p\}$. The vertex $r$ is adjacent to $x$, because $z$ is adjacent to $x$; this means $r \in N_x$ and $r \not\in \alpha K_1$. By restricting the equality $N_G(z)=N_G(r)-\{p\}$ to the subgraph $H$, we have $N_{H}(z)=N_{H}(r)$. The vertex $r$ is a false twin of $z$ in $H$, and this contradicts the PD property of $H$. It follows that $p,q \in N_x$. Since $z \in H^\circ$ and $N_G(p)=N_G(q)-\{z\}$, then either $p$ or $q$ is not in $H$. Since $q$ is adjacent to $z$, then $p \not\in H$ and is an isolated vertex of $N_x$. We also deduce that $q$ is a pendant vertex in $H$. If $p\neq y$, by Lemma \ref{birooni}, there exists a vertex $u \in G-\{p,z\}$ such that $N_G(z)=N_G(u)-\{p\}$. The vertex $u$ is adjacent to $x$, because $z$ is adjacent to $x$; this means $u \in N_x$ and $u \in H$. By restricting the equality $N_G(z)=N_G(u)-\{p\}$ to the subgraph $H$, we have $N_{H}(z)=N_{H}(u)$. Thus, the vertex $u$ is a false twin of $z$ in $H$, and this contradicts the PD property of $H$. So,
	\begin{itemize}
		\item[($*$)]  every vertex $z \in H^\circ$, is adjacent to a pendant vertex $q \in H$ such that $N_G(y)=N_G(q)-\{z\}$.
	\end{itemize}  
	Let $H_1$ be a connected component of $H$. Since by Lemma \ref{nKH}, $H$ is PD, also $H_1$ is PD. For $v_1 \in H_1^\circ$, we have $v_1 \in H^\circ$, by Lemma \ref{notejtema}. Now ($*$) implies that $v_1$ is adjacent to a pendant vertex $u_1 \in H_1$  and $N_G(y)=N_G(u_1)-\{v_1\}$. If $u_1 \in H_1^\circ$, we have $u_1 \in H^\circ$ by Lemma \ref{notejtema}. Again ($*$) implies that $u_1$ has a pendant neighbor. Consequently, $H_1= K_2=\mathcal{A}_1$ and $N_G(y)=N_G(v_1)-\{u_1\}$.
	If $u_1 \notin H_1^\circ$, the subgraph $G-u_1$ is not PD. Then there exist vertices $ v_{2},a \in G-u_1$ such that $N_G(v_{2})=N_G(a)-\{u_1\}$. If both $a$ and $ v_{2}$ belong to $N_y$, then $v_{2}\neq x$. Therefore, by Lemma \ref{birooni}, there exists a vertex $b \in G-\{v_{2} , u_1\}$ such that $N_G(u_1)=N_G(b)-\{v_{2}\}$.  The vertex $b$ is adjacent to $x$, because $u_1$ is; this means $b \in N_x$. By restricting the equality $N_G(u_1)=N_G(b)-\{v_{2}\}$ to the subgraph $N_x$, we get $N_{N_x}(u_1)=N_{N_x}(b)$. Since $H_1$ is connected, $b$ is a false twin of $u_1$ in $H_1$, and this contradicts the PD property of $H_1$. Therefore, $v_{2},a \in N_x$ and more specifically $v_{2},a \in H_1$. Since every vertex adjacent to $v_{i+1}$ is also adjacent to $v_1$, no pendant vertex in $H_1$ is adjacent to $v_{2}$. Thus,  the vertex $v_{2}$ cannot be a nucleus vertex of $H_1$. Since $N_G(v_{2})=N_G(v_1)-\{u_1\}$ and $H_1 \neq \mathcal{A}_1$, then $v_{2} \neq y$. Therefore, $G-v_{2}$ is not PD and there exist $c , u_{2} \in G - v_{2}$ such that $N_G(c)=N_G(u_{2})-\{v_{2}\}$. If both $c$ and $u_{2}$ belong to $N_y$, then $c \neq x$ and by Lemma \ref{birooni} there exists a vertex $d \in G-\{c , v_{2}\}$ such that $N_G(v_{2})=N_G(d)-\{c\}$; So $d \in N_x$ and the restriction of the equality $N_G(v_{2})=N_G(d)-\{c\}$ to $N_x$ results in $N_{N_x}(v_{2})=N_{N_x}(d)$.  Since $H_1$ is connected, $d$ is a false twin of $v_{2}$ in $H_1$, and this contradicts the PD property of $H_1$. Therefore, $c , u_{2} \in N_x$ and $u_{2} \in H_1$. Since $N_G(v_{2})=N_G(v_1)-\{u_1\}$ and $u_{2}$ is adjacent to $v_{1}$, therefore $u_{2}$ is adjacent to $v_1$. So, $c$ is adjacent to $v_1$ and not to $v_{2}$. Consequently, $c=u_1$ and $N_G(u_1)=N_G(u_{2})-\{v_{2}\}$.
	
Generally, for $i\geq 2$, suppose $\mathcal{A}_i$ is an induced subgraph of $H_1$, where $V(\mathcal{A}_i)=\{u_1, \ldots, u_i , v_1 , \ldots, v_i\}$ and
	
	\begin{itemize}
		\item[(i)] $N_{G}(v_k )=N_{G}(v_{k-1} )-\{u_{k-1}\}$, for $2\leq k\leq i$,
		\item[(ii)] $N_{G}(u_k )=N_{G}(u_{k+1} )-\{v_{k+1}\}$, for $1 \leq k \leq i-1$,
		\item[(iii)] $\deg_{H_1}(u_k)=k$, for $1 \leq k \leq i$.
	\end{itemize}
	
	If $u_{i}\in H_1^\circ$, we have $u_{i} \in H^\circ$ by Lemma \ref{notejtema}. By ($*$), $u_{i}$ is adjacent to a pendant vertex in $H_1$. We have $N_{H_1}(u_{i})=\{v_1 , \dots, v_{i}\}$ and $\{u_l, \dots , u_{i}\} \subseteq N_{H_1}(v_l)$ for $1 \leq l\leq i$. Then the only valid candidate for this pendant vertex is $v_{i}$. In summation, $v_{i}$ is a pendant vertex adjacent to $u_{i}$ and $H_1 =\mathcal{A}_{i}$.
	
		If $u_i \notin H_1^\circ$, the subgraph $G-u_i$ is not PD. Then there exist vertices $ v_{i+1},v \in G-u_i$ such that $N_G(v_{i+1})=N_G(v)-\{u_i\}$. If both $v$ and $ v_{i+1}$ belong to $N_y$, then $v_{i+1}\neq x$. Therefore, by Lemma \ref{birooni}, there exists a vertex $w \in G-\{v_{i+1} , u_i\}$ such that $N_G(u_i)=N_G(w)-\{v_{i+1}\}$.  The vertex $w$ is adjacent to $x$, because $u_i$ is; this means $w \in N_x$. By restricting the equality $N_G(u_i)=N_G(w)-\{v_{i+1}\}$ to the subgraph $N_x$, we get $N_{N_x}(u_i)=N_{N_x}(w)$. Since $H_1$ is connected, $w$ is a false twin of $u_i$ in $H_1$, and this contradicts the PD property of $H_1$. Therefore, $v_{i+1},v \in N_x$ and more specifically $v_{i+1},v \in H_1$. By (ii) and (iii), $N_{H_1}(u_i)=\{v_1 , \dots, v_i\}$. So, $v=v_j$ for a $1 \leq j \leq i$. We know that $N_{\mathcal{A}_i}(v_l)=\{u_l , \dots, u_i\}$, thus $v_{i+1}\neq v_l$, for $ 1\leq l\leq i$. Also, $N_{H_1}(u_l)=\{v_1 , \cdots, v_l\}$ for $ 1\leq l\leq i$. If $j\neq i$, then $N_G(v_{i+1})=N_G(v_j) -\{u_i\}$ for some $1\leq j<i$, $\{v_1 , \dots,v_j , v_{i+1}\} \subseteq N_{H_1}(u_j)$ and hence $|N_{H_1}(u_j)| > j$, a contradiction to (iii). Therefore, $v=v_i$ (statement (i) for $k=i+1$). Using $N_G(v_{i+1})=N_G(v_i)-\{u_i\}$ and (i), inductively, we have $N_G(v_{i+1})=N_G(v_1)-\{u_1, \ldots, u_i\}$. Thus, $v_{i+1} \notin \{u_1, \ldots, u_n\}$. Since every vertex adjacent to $v_{i+1}$ is also adjacent to $v_i$, no pendant vertex in $H_1$ is adjacent to $v_{i+1}$. Thus,  the vertex $v_{i+1}$ cannot be a nucleus vertex of $H_1$. Since $N_G(v_{i+1})=N_G(v_i)-\{u_i\}$ and $H_1 \neq \mathcal{A}_i$, then $v_{i+1} \neq y$. Therefore, $G-v_{i+1}$ is not PD and there exist $s , u_{i+1} \in G - v_{i+1}$ such that $N_G(s)=N_G(u_{i+1})-\{v_{i+1}\}$. If both $s$ and $u_{i+1}$ belong to $N_y$, then $s \neq x$ and by Lemma \ref{birooni} there exists a vertex $t \in G-\{s , v_{i+1}\}$ such that $N_G(v_{i+1})=N_G(t)-\{s\}$; so $t \in N_x$ and the restriction of the equality $N_G(v_{i+1})=N_G(t)-\{s\}$ to $N_x$ results in $N_{N_x}(v_{i+1})=N_{N_x}(t)$.  Since $H_1$ is connected, $t$ is a false twin of $v_{i+1}$ in $H_1$, and this contradicts the PD property of $H_1$. Therefore, $s , u_{i+1} \in N_x$ and $u_{i+1} \in H_1$. Since $N_G(v_{i+1})=N_G(v_i)-\{u_i\}$ and $u_{i+1}$ is adjacent to $v_{i+1}$, the vertex $u_{i+1}$ is adjacent to $v_i$. So, $s$ is adjacent to $v_i$ and not to $v_{i+1}$. Therefore, $s=u_i$. We have $N_G(u_i)=N_G(u_{i+1})-\{v_{i+1}\}$ (statement (ii) for $k=i$) and since $\deg_{H_1}(u_i)=i$ then $\deg_{H_1}(u_{i+1})=i+1$ (statement (iii) for $k=i+1$) and $\mathcal{A}_{i+1}$ is an induced subgraph of $H_1$.
		
		Continuing this way, we conclude that $H_1=\mathcal{A}_n$ for some $n\geq1$ and the result follows.  
\end{proof}
The proof of Lemma \ref{n1}, also implies the following two lemmas. First we explicitly rewrite the phrase ($*$) in the proof of Lemma  \ref{n1}.
\begin{lemma}\label{existence}
	Let $G$ be a connected point determining graph with $G^\circ = \{x,y\}$ and $N_x=\alpha K_1+H $ where $H$ is a point determining graph with no isolated vertex. For every vertex $u \in H^\circ$, there exists a pendant vertex $v \in H$ such that $N_G(y)=N_G(v)-\{u\}$. \hfill $\square$

\end{lemma}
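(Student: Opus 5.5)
The plan is to re-derive the claim ($*$) from the proof of Lemma \ref{n1} directly, starting from the hypothesis $N_x=\alpha K_1+H$ with $H$ point determining and without isolated vertices. Fix $u\in H^\circ$. First, $u\neq x$ because $u\in N_x$. Also $u\neq y$: by Theorem \ref{xya}, no vertex of $N_x-\{y\}$ is adjacent to $y$, so $y$ is isolated in $N_x$ and hence lies in the $\alpha K_1$ part, not in $H$. Therefore $u\notin G^\circ$, so $G-u$ is not point determining. This gives vertices $p,q\in G-u$ with $N_G(p)=N_G(q)-\{u\}$.

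By Theorem \ref{xya}, every vertex other than $x,y$ is adjacent to exactly one of $x,y$, and $N_x$, $N_y$ are disjoint. Since $N_G(p)$ and $N_G(q)$ can differ only in $u\notin\{x,y\}$, the vertices $p$ and $q$ lie both in $N_x$ or both in $N_y$ (the boundary cases where $p$ or $q$ equals $x$ or $y$ are handled the same way). We rule out $p,q\in N_y$ as follows.
\begin{itemize}
\item If $p,q\in N_y$, then $p$ is not adjacent to $u$. Since $u$ is adjacent to $x$, we get $p\neq x$, so $p\notin G^\circ$.
\item Lemma \ref{birooni}(i), applied with $a=p$, $b=q$, $c=u$, then gives $r\in G-\{p,u\}$ with $N_G(u)=N_G(r)-\{p\}$.
\item Since $x\in N_G(u)$, we have $r\in N_x$. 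Also $r\neq y$, because $y$ is isolated in $N_x$ while $u$, having no isolated position in $H$, has a neighbour in $H$ which $r$ would share. So $r\in H$.
\item Restricting the equality to $H$ (note $p\notin H$) gives $N_H(u)=N_H(r)$. This makes $r$ a false twin of $u$ in $H$, contradicting that $H$ is point determining.
\end{itemize}

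Hence $p,q\in N_x$. Restricting $N_G(p)=N_G(q)-\{u\}$ to $H\cup\alpha K_1$, the vertices $p$ and $q$ cannot both lie in $H$. Otherwise $N_H(p)=N_H(q)-\{u\}$, which would contradict $u\in H^\circ$, since then $p,q$ would be false twins in $H-u$. Since $q$ is adjacent to $u\in H$, $q$ is not isolated in $N_x$, so $q\in H$ and $p$ is an isolated vertex of $N_x$. Then $N_H(q)=N_H(p)\cup\{u\}=\{u\}$, so $q$ is pendant in $H$. Set $v=q$.

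It remains to show $p=y$, which would give $N_G(y)=N_G(v)-\{u\}$. Suppose $p\neq y$. Then $p\neq x$ and $p\notin G^\circ$, so Lemma \ref{birooni}(i) yields $w$ with $N_G(u)=N_G(w)-\{p\}$. As before, $w\in N_x$ and in fact $w\in H$, and restricting to $H$ gives $N_H(u)=N_H(w)$, a false twin of $u$ in $H$ and again a contradiction.

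The main obstacle is the bookkeeping in the $\alpha K_1$ part. One must check that the twin vertices $r,w$ produced by Lemma \ref{birooni} really lie in $H$ and not among the isolated vertices of $N_x$. This is where I use that $u$ has a neighbour in $H$, which $r$ and $w$ must then share. One must also check that the restriction to $H$ discards exactly the vertex $p$.
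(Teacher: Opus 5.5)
Your proof is correct and follows essentially the same route as the paper's argument for ($*$) in the proof of Lemma \ref{n1}. Both obtain $N_G(p)=N_G(q)-\{u\}$, exclude $p,q\in N_y$ and $p\neq y$ by using Lemma \ref{birooni}(i) to produce a false twin of $u$ in $H$, and then conclude that $q$ is pendant in $H$. Your argument that the twin $r$ (or $w$) lies in $H$, because it inherits a neighbour of $u$ in $H$, fills in a step the paper only asserts.
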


\begin{lemma}\label{eqG}
 Let $G$ be a connected point determining graph with $G^\circ = \{x,y\}$ and $N_x=\alpha K_1+H $ where $H$ is a point determining graph with no isolated vertex. For a connected component $\mathcal{A}_n$ of $H$, the following properties hold.
\begin{itemize}
	\item[(i)] $N_{G}(v_i)=N_{G}(v_{i-1} )-\{u_{i-1}\}$, for $2\leq i\leq n$,
	\item[(ii)] $N_{G}(u_i )=N_{G}(u_{i+1} )-\{v_{i+1}\}$, for $1 \leq i \leq n-1$,
	\item[(iii)] $N_G(y)=N_G(v_i)-\{u_i , \ldots , u_n\}$, for $1 \leq i\leq n$,
	\item[(iv)] $N_G(y)=N_G(u_i)-\{v_1, \ldots, v_i\}$, for $1 \leq i\leq n$.
\end{itemize}
\end{lemma}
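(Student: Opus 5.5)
The plan is to read off (i) and (ii) from the inductive construction in the proof of Lemma \ref{n1}, and then to derive (iii) and (iv) by applying Lemma \ref{existence} to the two nucleus vertices of the component $\mathcal{A}_n$ and propagating along the chains given by (i) and (ii).

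\textbf{Step 1: (i) and (ii).} In the proof of Lemma \ref{n1}, each component $H_1$ of $H$ was built as $\mathcal{A}_n$ with a specific labeling $u_1,\ldots,u_n,v_1,\ldots,v_n$. That labeling was produced inductively so that statements (i) and (ii) hold with $N_G$, not only inside $\mathcal{A}_n$. The construction stops at the index $n$ where $u_n\in H_1^\circ$. So (i) and (ii) are exactly the invariants maintained there, and I would simply cite them, using that labeling of the component.

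\textbf{Step 2: the two nucleus vertices.} By (\AA3), $\mathcal{A}_n^\circ=\{v_1,u_n\}$. Since $H$ is PD with nontrivial components, Lemma \ref{notejtema} gives $v_1,u_n\in H^\circ$. Now apply Lemma \ref{existence} to each of them.
\begin{itemize}
\item[(a)] For $v_1$: there is a pendant vertex of $H$ adjacent to $v_1$. The neighbors of $v_1$ in $H$ are $u_1,\ldots,u_n$, and $\deg_H(u_k)=k$, so this pendant vertex must be $u_1$. Hence $N_G(y)=N_G(u_1)-\{v_1\}$.
\item[(b)] For $u_n$: its neighbors in $H$ are $v_1,\ldots,v_n$, and $\deg_H(v_k)=n-k+1$, so the pendant neighbor must be $v_n$. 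Hence $N_G(y)=N_G(v_n)-\{u_n\}$.
\end{itemize}
When $n=1$, both (a) and (b) are immediate, since $\mathcal{A}_1=K_2$.

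\textbf{Step 3: (iii) and (iv).} Iterating (ii) gives
\[
N_G(u_i)=N_G(u_1)\cup\{v_2,\ldots,v_i\},
\]
where the union is disjoint because $u_1$ is pendant in $H$. Combined with (a), this yields $N_G(y)=N_G(u_i)-\{v_1,\ldots,v_i\}$, which is (iv). Symmetrically, iterating (i) downward from $n$ gives
\[
N_G(v_i)=N_G(v_n)\cup\{u_i,\ldots,u_{n-1}\},
\]
and with (b) this yields (iii).

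\textbf{Main obstacle.} The only delicate point is the identification of the pendant vertices in Step 2, i.e.\ checking that no vertex of $H$ outside $\mathcal{A}_n$ can serve. This holds because the pendant vertex supplied by Lemma \ref{existence} is adjacent to the nucleus, hence lies in the same component $\mathcal{A}_n$. Inside $\mathcal{A}_n$, the degree pattern then pins it down uniquely.
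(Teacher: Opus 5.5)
Your proof is correct and follows the paper's route: you cite (i) and (ii) from the construction in Lemma~\ref{n1}, apply Lemmas~\ref{notejtema} and~\ref{existence} to the nucleus vertices of the component, and telescope. It is more careful than the paper's write-up, which applies Lemma~\ref{existence} only to $v_1$ and records the outcome as $N_G(y)=N_G(v_1)-\{u_1\}$ instead of $N_G(y)=N_G(u_1)-\{v_1\}$; that identity with (ii) gives (iv) but does not by itself give (iii) when $n\ge 2$, and your second application to $u_n$, yielding $N_G(y)=N_G(v_n)-\{u_n\}$, supplies exactly the missing base case.
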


\begin{proof}
	Parts (i) and (ii) are exactly the same as parts (i) and (ii) in the proof of Lemma \ref{n1}. Let $v_i \in V_G(\mathcal{A}_n)$, for $1 \leq i \leq n$. The vertex $v_1$ is a nucleus vertex of $\mathcal{A}_n$, therefore by Lemma \ref{notejtema}, $v_1 \in H^\circ$. By Lemma \ref{existence}, $v_1$ is adjacent to a pendant vertex in $H$, which is $u_1$, and we have $N_G(y)=N_G(v_1)-\{u_1\}$. Using induction on $i$ in the statement (i), gives (iii). The statement (iv) follows similarly with induction on $i$ and (ii).
\end{proof}

\begin{corol}\label{daraje}
	Let $G$ be a connected point determining graph with $G^\circ = \{x,y\}$.
	\begin{itemize}
		\item[(i)] If $z \neq y$ is isolated in $N_x$, then $\deg_G(z) < \deg_G(y)$,
		\item[(ii)] If $z$ belongs to some nontrivial connected component $\mathcal{A}_n$ of $N_x$, then $\deg_G(z)=\deg_G(y)+\deg_{\mathcal{A}_{n}}(z)$.
	\end{itemize}
\end{corol}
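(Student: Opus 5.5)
The plan is to read off both parts from the neighborhood structure given by Theorem \ref{xya}, Lemma \ref{n1} and Lemma \ref{eqG}. The basic observation, from Theorem \ref{xya}, is that $x$ and $y$ are adjacent and every vertex other than $x,y$ is adjacent to exactly one of them. Hence
\[
N_G(y)=\{x\}\cup \bigl(V(G)\setminus (N_G(x)\cup\{x\})\bigr),
\]
that is, $y$ is adjacent to $x$ and to every vertex outside $N_G(x)\cup\{x\}$. Equivalently, no vertex of $N_x$ is adjacent to $y$, and every vertex outside $N_x\cup\{x\}$ is.

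For (i), let $z\neq y$ be isolated in $N_x$. Then $z$ has no neighbor inside $N_G(x)$, so every neighbor of $z$ lies in $\{x\}\cup (V(G)\setminus (N_G(x)\cup\{x\}))=N_G(y)$. Thus $N_G(z)\subseteq N_G(y)$. Equality would make $z$ and $y$ false twins, which contradicts $G$ being PD (they are distinct since $z\neq y$). Hence the inclusion is strict and $\deg_G(z)<\deg_G(y)$.

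For (ii), by Lemma \ref{n1} the nontrivial components of $N_x$ are copies of $\mathcal{A}_n$, and we label them as in the definition, so $N_{\mathcal{A}_n}(v_i)=\{u_i,\ldots,u_n\}$ and $N_{\mathcal{A}_n}(u_i)=\{v_1,\ldots,v_i\}$. Lemma \ref{eqG}(iii) gives $N_G(y)=N_G(v_i)-\{u_i,\ldots,u_n\}$. The removed vertices really are in $N_G(v_i)$, and they are not in $N_G(y)$, since they lie in $N_x$ and so are not adjacent to $y$. Therefore $N_G(v_i)$ is the disjoint union of $N_G(y)$ and $\{u_i,\ldots,u_n\}$, which gives $\deg_G(v_i)=\deg_G(y)+(n-i+1)=\deg_G(y)+\deg_{\mathcal{A}_n}(v_i)$. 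Part (iv) of Lemma \ref{eqG} gives, in the same way, $\deg_G(u_i)=\deg_G(y)+i=\deg_G(y)+\deg_{\mathcal{A}_n}(u_i)$.

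The only delicate point is the disjointness in (ii): the set difference in Lemma \ref{eqG} must remove exactly $|\deg_{\mathcal{A}_n}(z)|$ elements. This follows from Theorem \ref{xya}, since $\mathcal{A}_n\subseteq N_x$ and no vertex of $N_x$ is adjacent to $y$.
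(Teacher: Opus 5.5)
Your proof is correct. Part (ii) is the paper's argument, read off from Lemma \ref{eqG}(iii),(iv), with the counting spelled out. Part (i), however, follows a different and more elementary route.

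The paper argues extremally. Among the isolated vertices of $N_x$ other than $y$, it chooses one, $z$, of largest degree. Since $G-z$ is not PD, there is a triple $N_G(p)=N_G(q)-\{z\}$. Lemma \ref{birooni} then produces $r$ with $N_G(z)=N_G(r)-\{p\}$. Restricting to $N_x$ shows $r$ is again isolated there and has larger degree, so maximality forces $r=y$. This gives $\deg_G(z)=\deg_G(y)-1$, and the bound for every other isolated vertex follows from the choice of $z$.

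You instead observe that Theorem \ref{xya} amounts to the identity $N_G(y)=V(G)\setminus N_G(x)$. Hence any $z$ isolated in $N_x$ satisfies $N_G(z)\subseteq N_G(y)$, and the PD hypothesis makes the inclusion strict.

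What each route buys:
\begin{itemize}
\item Your argument needs neither Lemma \ref{birooni}, nor the fact that $z$ is non-nucleus, nor an extremal choice. It gives the containment $N_G(z)\subsetneq N_G(y)$ for every such $z$ directly.
\item The paper's argument additionally shows that the largest degree of such a vertex is exactly $\deg_G(y)-1$. That extra information is not needed for the statement or its later uses.
\end{itemize}

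A small remark on your closing paragraph: disjointness from $N_G(y)$ is automatic from the form of the identities in Lemma \ref{eqG}. What genuinely needs checking is $\{u_i,\ldots,u_n\}\subseteq N_G(v_i)$ (and the analogue for $u_i$), which follows from $\mathcal{A}_n$ being a subgraph of $G$. You do state this, so there is no gap.
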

\begin{proof}
	(i) Among all isolated vertices in $N_x-y$, consider $z$ with largest degree in $G$. The subgraph $G-z$ is not PD, therefore, there exist vertices $p,q \in G-z$ such that $N_G(p)=N_G(q)-\{z\}$. Since $z$ is an isolated vertex in $N_x$, the vertices $p$ and $q$ are not in $N_x$ and $p \neq x$. Thus, there exist $r \in G-\{p,z\}$ such that $N_G(z) = N_G(r)-\{p\}$. The vertex $r$ is adjacent to $x$, because $z$ is. Thus, $r \in N_x$.  By restricting the equality $N_G(z) = N_G(r)-\{p\}$ to the subgraph $N_x$, we have $N_{N_x}(z)=N_{N_x}(r)$ and this means $r$ is an isolated vertex of $N_x$. Since $z$ had the largest degree among the isolated vertices of $N_x$, then $r=y$ and $\deg_G(z)=\deg_G(y)-1<\deg_G(y)$.
	\\(ii) Follows from parts (iii) and (iv) of Lemma \ref{eqG}.
\end{proof}

\begin{corol}\label{maxdaraje}
	Let $G$ be a connected point determining graph with $G^\circ = \{x,y\}$. Then the following statements hold.
	\begin{itemize}
		\item[(i)] If both $N_x$ and $N_y$ have no edges, then $\Delta(G)=\max \{\deg_G(x), \deg_G(y)\}$.
		\item[(ii)] If either $N_x$ or $N_y$ has an edge, then $\Delta(G)= \deg_G(z)$ for a non-isolated vertex $z \in N_x+N_y$. \hfill $\square$
	\end{itemize}
\end{corol}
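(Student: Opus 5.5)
We prove Corollary \ref{maxdaraje} by going through the vertex set of $G$ and comparing degrees. By Theorem \ref{xya}, $x$ and $y$ are adjacent and every vertex is adjacent to exactly one of them. Hence $V(G)=N_x\cup N_y$ as a disjoint union, with $y\in N_x$ and $x\in N_y$. By Lemma \ref{nKH}, $y$ is isolated in $N_x$; by the symmetric statement (the hypotheses are symmetric in $x$ and $y$), $x$ is isolated in $N_y$. Apply Lemma \ref{n1} to both $x$ and $y$. Then $N_x=\alpha K_1+\mathcal{A}_{k_1}+\cdots+\mathcal{A}_{k_m}$, possibly with $m=0$, and likewise $N_y=\beta K_1+\mathcal{A}_{l_1}+\cdots+\mathcal{A}_{l_p}$. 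So each vertex of $G$ is of one of three kinds: it is $x$ or $y$; it is an isolated vertex of $N_x$ or $N_y$ other than $y$ or $x$; or it lies in a nontrivial component $\mathcal{A}_n$ of $N_x$ or $N_y$.

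For (i), assume both $N_x$ and $N_y$ are edgeless. Then every vertex other than $x$ and $y$ is an isolated vertex of $N_x-y$ or of $N_y-x$. Corollary \ref{daraje}(i) gives $\deg_G(z)<\deg_G(y)$ for such $z\in N_x$. Applying the same corollary with the roles of $x$ and $y$ exchanged gives $\deg_G(z)<\deg_G(x)$ for such $z\in N_y$. So the maximum degree is attained at $x$ or $y$, which proves (i).

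For (ii), suppose without loss of generality that $N_x$ has an edge. Then $N_x$ has a nontrivial component $\mathcal{A}_n$, and by Corollary \ref{daraje}(ii) any $z$ in it satisfies $\deg_G(z)=\deg_G(y)+\deg_{\mathcal{A}_n}(z)>\deg_G(y)$. Isolated vertices of $N_x-y$ also have degree below $\deg_G(y)$ by Corollary \ref{daraje}(i). It remains to show that $x$ is not the unique maximizer. Two cases arise.
\begin{itemize}
\item If $N_y$ has an edge too, the symmetric argument gives a vertex of a nontrivial component of $N_y$ whose degree exceeds $\deg_G(x)$. Then every vertex of maximum degree is a non-isolated vertex of $N_x$ or $N_y$.
\item If $N_y$ is edgeless, $x$ has no neighbours inside $N_y$ (it is isolated there), so $\deg_G(x)=|N_x|$. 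We must compare $|N_x|$ with $\deg_G(z)$ for the vertices $z$ of the components of $N_x$.
\end{itemize}

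The main obstacle is this second case. Here I would use Lemma \ref{eqG}(iii),(iv) for the explicit neighbourhoods. For instance, $N_G(u_n)=N_G(y)\cup\{v_1,\dots,v_n\}$, so $\deg_G(u_n)=\deg_G(y)+n$. Also $\deg_G(y)=|N_y|$, because $N_x$ is isolated-from-$y$ by Theorem \ref{xya}. The claim then reduces to a counting inequality between $|N_y|+n$ and $|N_x|$, which need not hold in general. If it fails, the statement should be read as saying only that $\Delta(G)$ is attained at $x$ or at a non-isolated vertex. I would first try to force $|N_y|$ to be large, using Lemma \ref{eqG}(iii): all of $N_y$ lies in $N_G(v_i)$. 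Failing that, I would restate (ii) in this weaker form.
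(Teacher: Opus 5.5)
Your argument for (i), and for (ii) when both $N_x$ and $N_y$ contain an edge, is correct. It is the argument the paper intends: the corollary is stated without proof, as a consequence of Corollary~\ref{daraje}. But the proposal does not establish (ii). It stops at the subcase where $N_x$ has an edge and $N_y$ is edgeless, and ends with a hedge.

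That subcase cannot be closed by any argument, in particular not by forcing $|N_G(y)|$ to be large, because (ii) is false there. Take $V(G)=\{x,y,u,v\}$ with edges $xy,xu,xv,uv$, a triangle with a pendant vertex $y$ at $x$.
\begin{itemize}
\item The neighbourhoods $\{y,u,v\},\{x\},\{x,v\},\{x,u\}$ are distinct, so $G$ is connected and point determining.
\item $G-x=K_1+K_2$ and $G-y=K_3$ are point determining, while in $G-u$ and in $G-v$ the vertex $y$ acquires a false twin. Hence $G^\circ=\{x,y\}$.
\item $N_x=K_1+\mathcal{A}_1$ contains the edge $uv$, and $N_y=K_1$.
\item $\Delta(G)=\deg_G(x)=3$, yet the only non-isolated vertices of $N_x+N_y$ are $u$ and $v$, each of degree $2$.
\end{itemize}
In your own reduction this is $|N_y|+n=2<3=|N_x|$, with $|N_y|=1$ already as small as possible.

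So your analysis yields a correction, not a proof. Suppose exactly one of the two neighbourhoods has an edge, say $N_x=\alpha K_1+\mathcal{A}_{k_1}+\cdots+\mathcal{A}_{k_m}$ with $m\ge 1$ and $N_y$ edgeless. Then $\Delta(G)=\max\{\deg_G(x),\,\deg_G(y)+\max_j k_j\}$, and either term can be the larger one.

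Statement (ii) does hold in two situations:
\begin{itemize}
\item when both $N_x$ and $N_y$ have edges, which is your first bullet;
\item whenever $\deg_G(x)=\deg_G(y)$. A vertex of a nontrivial component of $N_x$ then has degree greater than this common value, while every isolated vertex of $N_x+N_y$, namely $x$, $y$ and the isolated vertices of $N_x-y$ and $N_y-x$, has degree at most it.
\end{itemize}
The second situation is exactly that of Lemma~\ref{deck2pd}, the only place the corollary is used. There $G-x\simeq G-y$, and Remark~\ref{no.An} even gives equally many nontrivial components in $N_x$ and $N_y$. So the main theorem is unaffected.

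You should present the counterexample and prove (ii) under that added hypothesis rather than leave the case open. Alternatively, you may state your weaker form: $\Delta(G)$ is attained at $x$ or at a non-isolated vertex. That form is also true.
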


\begin{remark}\label{xyconnected}
	Let $G$ be a point determining graph. By Lemma \ref{notejtema}, if $G$ is disconnected with no trivial component, then $|G^\circ| \geq 4$. If $G^\circ=\{x,y\}$, then either $G$ is connected or exactly one of $x$ and $y$ is the unique isolated vertex of $G$. As we will see later, since we are studying the reconstruction problem, we can further assume that $G-x \simeq G-y$. So, in fact, $G$ has no isolated vertex and is connected. 
\end{remark}

\begin{remark}\label{no.An}
	Let $G$ be a point determining graph such that $G^\circ=\{x,y\}$ and $G-x \simeq G-y$. Then $\deg_G(x)=\deg_G(y)=:d$. Every nontrivial connected component of $N_x+N_y$, which is $\mathcal{A}_n$ for some $n$, has exactly two pendant vertices. By Corollary \ref{daraje}, only the pendant vertices in $N_x+N_y$ have degree $d+1$ in $G$ and only $x$ and $y$ have degree $d$ in $G$. Thus the occurrence of $d$ in the degree sequence of $G-x$ is even and is twice the number of nontrivial components of $N_x$. Since the degree sequences of $G-x$ and $G-y$ are identical, we have the same number of nontrivial components in $N_x$ and $N_y$.
\end{remark}

Suppose $G$ is a PD graph. For $q \notin G^\circ$, there exist vertices $p , r \in G-q$ such that

\begin{equation}\label{hamsaye}
N_G(r)=N_G(p)-\{q\}
\end{equation}

If $r$, $p$ and $q$ satisfy Equation (\ref{hamsaye}), we write $p \rightarrow q  \rightsquigarrow r$ or $r \rightsquigarrow q \rightarrow p$. In \cite{feder2008}, these three vertices $p,q,r$ are called a triple and an equivalence relation between triples is defined. In this step, we adopt the same approach as in \cite{feder2008} restricted to the case $|G^\circ|=2$ and give the necessary details and proofs corresponding to the recognizability of being PD.

\begin{defi}\label{sequence}
	Let $G$ be a point determining graph. An $\Omega$-sequence is a sequence $(a_0,a_1,\ldots, a_k)$, $k\geq 3$, of distinct vertices, except possibly $a_0=a_k$, such that either of the following occurs.
	\begin{itemize}
		\item[(I)] $a_0 \rightarrow a_1 \rightsquigarrow a_2 \rightarrow a_3 \rightsquigarrow \cdots
			(\rightsquigarrow/\rightarrow) a_k$
	
		\item[(II)] $a_0 \rightsquigarrow a_1 \rightarrow a_2 \rightsquigarrow a_3 \rightarrow \cdots (\rightsquigarrow/\rightarrow) a_k$
	
	\end{itemize} A saturated $\Omega$-sequence $(a_0,a_1,\ldots, a_k)$, is an $\Omega$-sequence such that $a_0, a_k \in G^\circ$.
\end{defi}

\begin{remark}\label{fasele}
	Let $G$ be a point determining graph and $(a_0,a_1,\ldots,a_k)$ an $\Omega$-sequence. It is obvious that $a_i$ is not a nucleus vertex for $ 1\leq i \leq k-1$. The neighborhoods in (I) and (II) are in the form  $N_G(a_{i+2})=N_G(a_i)-\{a_{i+1}\}$ or $N_G(a_{i})=N_G(a_{i+2})-\{a_{i+1}\}$, for $0 \leq i \leq k-2$. Consider neighborhoods with the form $N_G(a_{i+2})=N_G(a_i)-\{a_{i+1}\}$. 
	
	\noindent For $a_{i+2}$ and $a_{i+4}$ we have
	$$N_G(a_{i+2})=N_G(a_{i})-\{a_{i+1}\} \quad \text{and} \quad N_G(a_{i+4})=N_G(a_{i+2})-\{a_{i+3}\}.$$
	One can deduce that $N_G(a_{i+4})=N_G(a_i)-\{a_{i+1}, a_{i+3}\}$ and $\{a_{i+1}, a_{i+3}\} \subseteq N_G(a_i)$. 
	
	\noindent For $a_{i+6}$, we have $N_G(a_{i+6})=N_G(a_{i+4})-\{a_{i+5}\}$ from which it follows that, $$N_G(a_{i+6})=N_G(a_i)-\{a_{i+1}, a_{i+3} , a_{i+5}\}, \quad \{a_{i+1}, a_{i+3} , a_{i+5}\} \subseteq N_G(a_i)$$
	and 
	$$ N_G(a_{i+6})=N_G(a_{i+2})-\{a_{i+3} , a_{i+5}\}, \quad \{a_{i+3} , a_{i+5}\} \subseteq N_G(a_{i+2}).$$
	Inductively, for $0\leq i \leq k-2$ and $2 \leq i+2j \leq k$, we have 
	$$N_G(a_{i+2j})=N_G(a_{i}) - \{a_{i+1} , a_{i+3} , \ldots a_{i+2j-1}\}, \quad \{a_{i+1} , a_{i+3} , \ldots a_{i+2j-1}\} \subseteq N_G(a_{i}).$$
	Similarly, from $N_G(a_i)=N_G(a_{i+2})-\{a_{i+1}\}$, we have
	$$N_G(a_{i})=N_G(a_{i+2j}) - \{a_{i+1} , a_{i+3} , \ldots a_{i+2j-1}\}, \quad \{a_{i+1} , a_{i+3} , \ldots a_{i+2j-1}\} \subseteq N_G(a_{i+2j}).$$
\end{remark}

\begin{lemma}\label{sequence-existence}
	Let $G$ be a point determining graph. Then an $\Omega$-sequence in $G$ can be extended to a saturated $\Omega$-sequence. Consequently, each non-nucleus vertex belongs to a saturated $\Omega$-sequence.
\end{lemma}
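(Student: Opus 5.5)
The idea is to extend the sequence step by step at each end, using Lemma \ref{birooni}, until both endpoints are nucleus vertices. Finiteness of $G$ together with distinctness of the terms will force termination.

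Let $(a_0,\ldots,a_k)$ be an $\Omega$-sequence. We extend it at the right end; the left end is symmetric, since reversing the sequence turns a sequence of type (I) or (II) into one of the same shape. If $a_k\in G^\circ$, nothing needs to be done at that end. Otherwise look at the last relation, which involves $a_{k-2},a_{k-1},a_k$.

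In the first case it is $a_{k-2}\rightarrow a_{k-1}\rightsquigarrow a_k$, that is, $N_G(a_k)=N_G(a_{k-2})-\{a_{k-1}\}$. Apply Lemma \ref{birooni}(i) with $a=a_k$, $b=a_{k-2}$, $c=a_{k-1}$, which is allowed because $a_k\notin G^\circ$. This gives $d\neq a_k,a_{k-1}$ with $N_G(a_{k-1})=N_G(d)-\{a_k\}$, i.e. $a_{k-1}\rightsquigarrow a_k\rightarrow d$. The alternation is preserved, so setting $a_{k+1}=d$ gives the extended sequence.

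In the second case the last relation is $a_{k-2}\rightsquigarrow a_{k-1}\rightarrow a_k$, that is, $N_G(a_{k-2})=N_G(a_k)-\{a_{k-1}\}$. Apply Lemma \ref{birooni}(ii) with $a=a_{k-2}$, $b=a_k$, $c=a_{k-1}$. This gives $d$ with $N_G(d)=N_G(a_{k-1})-\{a_k\}$, i.e. $a_{k-1}\rightarrow a_k\rightsquigarrow d$, and again we set $a_{k+1}=d$.

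The main obstacle is showing that the new vertex $d$ is distinct from $a_0,\ldots,a_{k-2}$, apart from the permitted closure $d=a_0$. I would use Remark \ref{fasele}. Suppose $d=a_j$ with $1\le j\le k-2$.

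1. If $j\equiv k+1 \pmod 2$, then Remark \ref{fasele} relates $N_G(a_j)$ and $N_G(a_{k+1})=N_G(d)$ by removing a nonempty set of odd-indexed vertices. This cannot happen if they are the same vertex, since the two neighborhoods would then differ.
2. If $j\equiv k \pmod 2$, then $a_j$ lies on the parity class of the "middle" vertices. I would compare the neighborhood containments: for instance, in the first case $a_k\in N_G(d)$ while $a_k\notin N_G(a_{k-1})$, and the remark gives $a_j\in N_G(a_i)$ for suitable $i$. Pushing such adjacencies along the chain should contradict the strict nesting of neighborhoods.
3. The degree is also strictly monotone along each parity class, which should rule out repeats. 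Here a repeat would close a strict chain of proper inclusions into a cycle, which is impossible.

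I expect this bookkeeping to be the delicate part. The cleanest way is probably to note that along the even-indexed and odd-indexed terms the neighborhoods form strictly monotone chains with respect to inclusion, and that the two chains run in opposite directions.

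Once distinctness is established, every extension strictly lengthens a sequence of distinct vertices in a finite graph, so the process halts. It can only halt when the end vertex is a nucleus vertex, or when the sequence closes up with $d=a_0$. In the latter case the endpoint $a_0$ is itself nucleus, or we continue extending at the other end. Doing the same at the left end yields a saturated $\Omega$-sequence.

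For the consequence, take $q\notin G^\circ$. By definition there are $p,r$ with $p\rightarrow q\rightsquigarrow r$. Applying Lemma \ref{birooni} once at $r$, if $r\notin G^\circ$, or at $p$ otherwise, produces a sequence of length at least $3$ containing $q$. If both $p,r\in G^\circ$, I would handle this separately by extending through the other neighbour relation. Then the first part applies and $q$ lies in a saturated $\Omega$-sequence.
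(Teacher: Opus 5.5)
\textbf{Verdict.} Your strategy is the paper's: extend at a non-nucleus end via Lemma \ref{birooni}, choosing part (i) or (ii) by the orientation of the last relation. Both applications of that lemma are correct. The gap is that the step carrying the proof, namely that $d=a_{k+1}$ is a new vertex, is only announced, and the endpoint bookkeeping has errors.

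\textbf{Distinctness across parity classes.} Your item 1 (indices $j\equiv k+1$) is fine, since along a parity class the neighborhoods form a strict chain. Items 2 and 3 do not handle the other class. Two inclusion chains running in opposite directions can share a member, and degree monotonicity within a class says nothing about a coincidence $d=a_i$ with $i\equiv k \pmod 2$. What is missing is an adjacency witness combined with the absence of loops.
\begin{itemize}
\item In your first case ($N_G(a_k)=N_G(a_{k-2})-\{a_{k-1}\}$), the class of $a_k$ decreases to the right. So $N_G(a_k)\subsetneq N_G(a_i)$ for $i\equiv k$, $i<k$. This forces $a_k\notin N_G(a_i)$, since otherwise $a_i\in N_G(a_k)\subseteq N_G(a_i)$. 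But $a_k\in N_G(d)$, so $d\neq a_i$.
\item In your second case ($N_G(a_{k-2})=N_G(a_k)-\{a_{k-1}\}$), let $m\in\{0,1\}$ be the least index with $m\equiv k+1$. Every $a_i$ with $i\equiv k$ and $m<i\le k$ satisfies $a_i\in N_G(a_{i-1})\subseteq N_G(a_m)$. On the other hand $N_G(d)\subsetneq N_G(a_m)$ gives $a_m\notin N_G(d)$, so $d\neq a_i$.
\end{itemize}
This is what the paper's parity checks via Remark \ref{fasele} amount to, e.g.\ $\{a_2,a_4,\ldots,a_k\}\subseteq N_G(a_1)$.

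\textbf{Closure and endpoints.} A coincidence $d=a_0$ can occur only in your second case with $m=1$. If then $a_0\notin G^\circ$, you cannot ``continue extending at the other end'': both ends are the same vertex, so any further extension repeats $a_0$ in the interior. You must show closure forces $a_0\in G^\circ$. Here $N_G(a_0)=N_G(a_2)-\{a_1\}$, so if $a_0\notin G^\circ$, Lemma \ref{birooni}(i) gives $e$ with $N_G(a_1)=N_G(e)-\{a_0\}$. Then $e\in N_G(a_0)=N_G(d)\subseteq N_G(a_1)$, hence $a_1\in N_G(e)=N_G(a_1)\cup\{a_0\}$, which is absurd. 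Similarly, before appending $a_{k+1}$ you must rule out, as the paper does, that the given sequence already has $a_0=a_k$ with $a_k\notin G^\circ$.

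\textbf{The case $p,r\in G^\circ$.} Your plan to treat this ``separately'' cannot work in general. Take $P_5$ with consecutive vertices $v_1,\dots,v_5$. Then $G^\circ=\{v_1,v_3,v_5\}$, and the only triple with middle vertex $v_2$ is $v_3\rightarrow v_2\rightsquigarrow v_5$. So $v_2$ lies in no saturated $\Omega$-sequence with $k\ge 3$: it would have to be an interior term, forcing $v_3$ or $v_5$ to be interior as well. The paper silently counts the triple itself as the required sequence. Where the lemma is later used (isomorphic PD cards), this case cannot arise, since $\deg_G(p)=\deg_G(r)+1$ is incompatible with $G-p\simeq G-r$.
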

\begin{proof}
	Let $(a_0,a_1,\ldots, a_k)$ be an $\Omega$-sequence. Assume that $a_k \not\in G^\circ$. Two cases occur.
	\begin{itemize}
		\item[Case 1.] The $\Omega$-sequence ends with $a_{k-2} \rightsquigarrow a_{k-1} \rightarrow a_k$, equivalently, $N_G(a_{k-2})= N_G(a_k)-\{a_{k-1}\}$. Therefore, by Lemma \ref{birooni}, there exists $a_{k+1} \in G-\{a_k , a_{k-1}\}$ such that $N_G(a_{k+1})=N_G(a_{k-1})-\{a_k\}$. If we show that $a_k\neq a_0$ and $a_{k+1} \notin \{a_1,\ldots,a_{k-2}\}$, then $(a_0,\ldots,a_k,a_{k+1})$ is again an $\Omega$-sequence. 
		
		If $k$ is even, by Remark \ref{fasele}, $N_G(a_0)=N_G(a_k)-\{a_{1} , a_{3} , \ldots, a_{k-1}\}$. 
		Thus $a_k \neq a_0$. Again, by Remark \ref{fasele}, $N_G(a_{k+1})=N_G(a_{i})-\{a_{i+1}, a_{i+3},\ldots,a_{k}\}$, for odd $i$, $0 \leq i \leq k$. Since the neighborhoods of $a_{k+1}$ and $a_i$ are different, we have $a_{k+1} \neq a_i$ for each odd $i$, $0 \leq i \leq k$. Since $\{a_2, a_4, \ldots, a_k\} \subseteq N_G(a_1)$, we have $a_{k+1} \neq a_i$ for even $i$, $2 \leq i \leq k$. Hence $a_{k+1} \notin \{a_1,\ldots,a_k\}$.
		
		If $k$ is odd, once more by Remark \ref{fasele}, $N_G(a_1)=N_G(a_k)-\{a_{2} , a_{4} , \ldots, a_{k-1}\}$. Since $a_1$ is adjacent to $a_0$, so $a_0$ and $a_k$ are adjacent. 
		Thus $a_k \neq a_0$ and a similar argument as in the case $k$ even, shows that $a_{k+1} \notin \{a_1,\ldots,a_k\}$. In addition, in this case, $a_{k+1} \neq a_0$.
		
		In either case, $(a_0,a_1, \ldots, a_k,a_{k+1})$ is an $\Omega$-sequence. 
		
		\item[Case 2.] The $\Omega$-sequence ends with $a_{k-2} \rightarrow a_{k-1} \rightsquigarrow a_k$, equivalently, we have $N_G(a_{k})= N_G(a_{k-2})-\{a_{k-1}\}$. Therefore, by Lemma \ref{birooni}, there exists $a_{k+1} \in G-\{a_k , a_{k-1}\}$ such that $N_G(a_{k-1})=N_G(a_{k+1})-\{a_k\}$. As in Case 1, $a_{k+1} \notin \{a_1,\ldots,a_k\}$. If we show that $a_k\neq a_0$, then $(a_0,\ldots,a_k,a_{k+1})$ is an $\Omega$-sequence.
		
		If $k$ is even, by Remark \ref{fasele}, $N_G(a_k)=N_G(a_0)-\{a_{1} , a_{3} , \ldots, a_{k-1}\}$. 
		Thus $a_k \neq a_0 $.
		
		Let $k$ be odd. If $a_{k+1}=a_0$, since $a_k$ and $a_{k+1}$ are adjacent, then $a_k$ and $a_0$ are adjacent, too. Thus, $a_k \neq a_0$. If $a_{k+1} \neq a_0$, from $N_G(a_{k-1})=N_G(a_{k+1})-\{a_k\}$ and $N_G(a_0)=N_G(a_{k-1})-\{a_{1} , a_{3} , \ldots, a_{k-2}\}$, and by Remark \ref{fasele}, we have $N_G(a_0)=N_G(a_{k+1})-\{a_{1} , a_{3} , \ldots, a_{k-1}\}$. Thus, $a_k \neq a_0$. 
		
		In either case, $(a_0,a_1\ldots a_k,a_{k+1})$ is an $\Omega$-sequence. 
	\end{itemize}

	This process ends with a nucleus vertex. We do the same at the other end $a_0$ to reach a nucleus vertex. Finally, each non-nucleus vertex belongs to a triple which is, per se, an $\Omega$-sequence, that can be extended to a saturated $\Omega$-sequence.
\end{proof}

\begin{lemma}\label{kodd}
	Let $G$ be a point determining graph whose point determining cards are mutually isomorphic. If $(a_0,a_1,\ldots, a_k)$ is a saturated $\Omega$-sequence in $G$, then the following statements hold.
	\begin{itemize}
		\item[(i)] $k$ is odd.
		\item[(ii)] If $a_0=a_k$, then the sequence starts as in (II).
		\item[(iii)] If $|G^\circ|=2$ and $a_0\neq a_k$, then the sequence starts as in (I).
	\end{itemize}
\end{lemma}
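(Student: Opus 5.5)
The plan is to use the hypothesis only through one consequence: all nucleus vertices have the same degree. Indeed, for $x,y\in G^\circ$ the cards $G-x$ and $G-y$ are point determining and hence isomorphic by hypothesis, so they have the same number of edges, and therefore $\deg_G(x)=\deg_G(y)$. Everything else will come from the chains of neighborhood inclusions in Remark \ref{fasele}. Throughout, in form (I) the arrow between $a_i$ and $a_{i+1}$ is $\rightarrow$ for $i$ even and $\rightsquigarrow$ for $i$ odd, and in form (II) it is the reverse.

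\emph{Part (i).} Suppose $k$ is even. Apply Remark \ref{fasele} with $i=0$ and $2j=k$. In form (I) it gives $N_G(a_k)=N_G(a_0)-\{a_1,a_3,\ldots,a_{k-1}\}$ with $\{a_1,a_3,\ldots,a_{k-1}\}\subseteq N_G(a_0)$. In form (II) it gives the same identity with $a_0$ and $a_k$ interchanged. In either case $\deg_G(a_0)$ and $\deg_G(a_k)$ differ by $k/2\geq 1$, so in particular $a_0\neq a_k$. But $a_0,a_k\in G^\circ$, so their degrees are equal by the observation above, which is a contradiction. Hence $k$ is odd.

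\emph{Part (ii).} Let $a_0=a_k$; by (i) $k$ is odd. Suppose the sequence starts as in (I). Then $a_0\rightarrow a_1\rightsquigarrow a_2$ gives $N_G(a_2)=N_G(a_0)-\{a_1\}$, so $a_1\in N_G(a_0)$, that is, $a_0\in N_G(a_1)$. Along the odd indices we have $N_G(a_1)=N_G(a_3)-\{a_2\}$, and so on. Remark \ref{fasele} with $i=1$ and $2j=k-1$ then yields $N_G(a_1)\subseteq N_G(a_k)=N_G(a_0)$. Combining the two facts gives $a_0\in N_G(a_0)$, which is impossible since $G$ has no loops. Hence the sequence starts as in (II).

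\emph{Part (iii).} Now $|G^\circ|=2$ and $a_0\neq a_k$, so $\{a_0,a_k\}=G^\circ=\{x,y\}$. Since $G-x\simeq G-y$, Remark \ref{xyconnected} shows that $G$ is connected. Theorem \ref{xya} then gives $a_0a_k\in E(G)$. Suppose the sequence starts as in (II). Then $a_0\rightsquigarrow a_1\rightarrow a_2$ gives $N_G(a_0)=N_G(a_2)-\{a_1\}$, so $a_1\notin N_G(a_0)$. On the odd indices the relations are $a_1\rightarrow a_2\rightsquigarrow a_3$, that is, $N_G(a_3)=N_G(a_1)-\{a_2\}$, and so on. Remark \ref{fasele} then gives $N_G(a_k)\subseteq N_G(a_1)$. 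Since $a_0\in N_G(a_k)$, it follows that $a_1$ is adjacent to $a_0$, which contradicts $a_1\notin N_G(a_0)$. Hence the sequence starts as in (I).

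\emph{Main obstacle.} The main step needing care is bookkeeping: reading off correctly which arrow type occurs at each parity, so that Remark \ref{fasele} is applied in the right direction for the odd-indexed subsequence $a_1,a_3,\ldots,a_k$. Part (iii) also depends on connectivity; this requires justifying, via Remark \ref{xyconnected}, that under the hypothesis $G-x\simeq G-y$ neither nucleus vertex is isolated.
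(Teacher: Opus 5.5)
Your proof is correct and takes essentially the same route as the paper. Part (i) uses equal nucleus degrees together with Remark \ref{fasele}; part (ii) uses $N_G(a_1)\subseteq N_G(a_k)=N_G(a_0)$ to force a loop; part (iii) contradicts Theorem \ref{xya}, where you use the odd chain $N_G(a_k)\subseteq N_G(a_1)$ and the paper uses the mirror-image even chain $N_G(a_0)\subseteq N_G(a_{k-1})$, and you also explicitly justify the connectivity that Theorem \ref{xya} needs, which the paper leaves implicit.
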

\begin{proof}
	(i) First assume the saturated $\Omega$-sequence $(a_0,a_1,\ldots, a_k)$ starts as in (I). Since $a_0,a_k \in G^\circ$, we have $G-a_0 \simeq G-a_k$. Therefore $\deg_G(a_0)=\deg_G(a_k)$. If $k$ is even, then by Remark \ref{fasele}, 
	$$N_G(a_k)=N_G(a_0)-\{a_1 , a_3, \ldots, a_{k-1}\},$$
	which is a contradiction.
	
	 The case for a saturated $\Omega$-sequence $(a_0,a_1,\ldots, a_k)$ that starts as in (II), is similar.
	
	\noindent(ii) In contrary, assume that $(a_0,a_1,\ldots, a_k)$ is a saturated $\Omega$-sequence that starts as in (I). By (i), $k$ is odd therefore, by Remark \ref{fasele}, 
	$$N_G(a_1)=N_G(a_{k})-\{a_2, a_4, \ldots a_{k-1}\}.$$
	 Since $a_1$ and $a_0$ are adjacent, then $a_0$ and $a_k$ are adjacent, too; a contradiction.

	\noindent(iii) In contrary, assume that $(a_0,a_1,\ldots, a_k)$ is a saturated $\Omega$-sequence that starts as in (II). Again by (i), $k$ is odd and
	$$N_G(a_0)=N_G(a_{k-1})-\{a_1, a_3 , \ldots , a_{k-2}\}.$$
	 Since $a_{k-1}$ and $a_k$ are non-adjacent, then $a_0$ and $a_k$ are non-adjacent, too. This contradicts Theorem \ref{xya}.
\end{proof}

\begin{defi}
	Let $G$ be a point determining graph such that its point determining cards are mutually isomorphic and $S=(a_0,a_1,\ldots, a_k)$ be a saturated $\Omega$-sequence in $G$. Then $S$ is called a belt, if it starts as in (II) (Definition \ref{sequence}) and $a_0=a_k$; is called an N-sequence if it starts as in (II) and $a_0 \neq a_k$; and is called an A-sequence if it starts as in (I) and $a_0 \neq a_k$.
\end{defi}

	Let $G$ be a point determining graph such that its point determining cards are mutually isomorphic. Let $S=(a_0,a_1,\ldots, a_k)$ be a saturated $\Omega$-sequence. If $S$ is an A-sequence, then by Remark \ref{fasele}, for $0 \leq i \leq k-2$, we have 
	$$N_G(a_i)=N_G(a_0)-\{a_1 , a_3 , \ldots , a_{i-1}\},$$
	if $i$ is even, and
	$$N_G(a_i)=N_G(a_k)-\{a_{i+1} , a_{i+3}, \ldots , a_{k-1}\},$$
	if $i$ is odd.
	Consequently, $\deg_{G}(a_i)< \deg_{G}(a_0)=\deg_{G}(a_k)$ and if $|G^\circ|=2$, by Remark \ref{xyconnected} and Corollary \ref{daraje}, $a_i$ is an isolated vertex in $N_x+N_y$, for $ 0 < i < k$.

	Now let $S$ be a belt or an N-sequence. By Remark \ref{fasele}, for $0 \leq i \leq k-2$, we have 
	$$N_G(a_0)=N_G(a_i)-\{a_1 , a_3 , \ldots , a_{i-1}\},$$
	if $i$ is even, and
	$$N_G(a_k)=N_G(a_i)-\{a_{i+1} , a_{i+3}, \ldots , a_{k-1}\},$$
	if $i$ is odd.
	Consequently, $\deg_{G}(a_i)> \deg_{G}(a_0)=\deg_{G}(a_k)$ and if $|G^\circ|=2$, by Remark \ref{xyconnected} and Corollary \ref{daraje}, $a_i$ is not an isolated vertex in $N_x+N_y$, for $ 0 < i < k$.

So, we deduce the following result.
\begin{lemma}\label{asequence}
	Let $G$ be a point determining graph with $G^\circ=\{x,y\}$ such that $G-x \simeq G-y$. Then a non-nucleus vertex of $G$ belongs to an A-sequence if and only if it is an isolated vertex in $N_x+N_y$. \hfill$\square$
\end{lemma}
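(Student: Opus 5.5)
The plan is to combine Lemma \ref{sequence-existence} with the degree dichotomy established in the discussion just before the statement, using Corollary \ref{daraje} to translate degrees into the structure of $N_x+N_y$. First record the standing facts. By Remark \ref{xyconnected}, $G$ is connected. The hypothesis $G-x\simeq G-y$ means the point determining cards are mutually isomorphic, since these cards are exactly $G-x$ and $G-y$. So Lemma \ref{kodd} and the definitions of A-sequence, N-sequence and belt apply. By Remark \ref{no.An}, $\deg_G(x)=\deg_G(y)=d$. By Theorem \ref{xya}, every vertex other than $x,y$ lies in exactly one of $N_x-\{y\}$ and $N_y-\{x\}$. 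Let $z$ be a non-nucleus vertex, so $z\notin\{x,y\}$.

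For the forward direction, suppose $z=a_i$ with $0<i<k$ in an A-sequence $(a_0,\dots,a_k)$. Here $\{a_0,a_k\}=\{x,y\}$, because the endpoints are distinct nucleus vertices. The computation preceding the statement, via Remark \ref{fasele}, gives $\deg_G(z)<d$. If $z$ were non-isolated in $N_x+N_y$, it would lie in a nontrivial component $\mathcal{A}_n$ of $N_x$ or of $N_y$. Corollary \ref{daraje}(ii) (applied with the roles of $x,y$ swapped if needed) would then give $\deg_G(z)=d+\deg_{\mathcal{A}_n}(z)>d$, a contradiction. Hence $z$ is isolated in $N_x+N_y$.

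For the converse, suppose $z$ is isolated in $N_x+N_y$. By Lemma \ref{sequence-existence}, $z$ lies in some saturated $\Omega$-sequence $S=(a_0,\dots,a_k)$ as an interior vertex. Since $G^\circ=\{x,y\}$, either $a_0=a_k$, and by Lemma \ref{kodd}(ii) $S$ is a belt, or $a_0\neq a_k$, and by Lemma \ref{kodd}(iii) $S$ starts as in (I), so $S$ is an A-sequence. In particular $N$-sequences cannot occur here. It therefore suffices to rule out that $S$ is a belt. For a belt, the computation before the statement gives $\deg_G(z)>\deg_G(a_0)=d$. But an isolated vertex $z\neq y$ of $N_x$ has $\deg_G(z)<d$ by Corollary \ref{daraje}(i), and symmetrically for $N_y$. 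This contradiction shows $S$ is an A-sequence.

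The main obstacle I expect is making the degree comparisons fully rigorous for every interior index. Remark \ref{fasele} expresses $N_G(a_i)$ relative to an endpoint only via chains whose length and parity must be checked, including the last index $i=k-1$. For a belt one must also confirm that the endpoint $a_0=a_k$ has degree $d$ (it is $x$ or $y$) and that the removed sets are nonempty, so the inequalities are strict. I would verify these by separating even and odd $i$ and using that $k$ is odd by Lemma \ref{kodd}(i).
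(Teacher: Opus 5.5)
Your proposal is correct and follows essentially the paper's argument. Lemma \ref{sequence-existence} places every non-nucleus vertex inside a saturated $\Omega$-sequence. The degree comparisons from Remark \ref{fasele} then show that interior vertices of A-sequences have degree $<d$ while those of belts (and N-sequences) have degree $>d$, and Corollary \ref{daraje} translates this into isolation or non-isolation in $N_x+N_y$.

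The only difference is cosmetic. You discard N-sequences outright via Lemma \ref{kodd}(iii), whereas the paper treats them together with belts through the same degree inequality.
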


\begin{lemma}\label{unique-sequence}
	Let $G$ be a point determining graph with $G^\circ=\{x,y\}$ such that $G-x\simeq G-y$. If every saturated $\Omega$-sequence of $G$ is an A-sequence, then each non-nucleus vertex belongs to exactly one A-sequence. 
\end{lemma}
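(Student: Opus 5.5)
The plan is to prove existence and uniqueness separately. Uniqueness reduces to showing that the triple through a non-nucleus vertex is unique.

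\emph{Existence.} By Lemma \ref{sequence-existence}, every non-nucleus vertex $q$ lies on a saturated $\Omega$-sequence, and by hypothesis this is an A-sequence.

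\emph{Consequences of the hypothesis.} By Lemma \ref{asequence}, every non-nucleus vertex is isolated in $N_x+N_y$. Hence both $N_x$ and $N_y$ are edgeless. By Remark \ref{xyconnected} and Theorem \ref{xya}, $G$ is then bipartite with parts $P=N_G(x)$ (containing $y$) and $Q=N_G(y)$ (containing $x$). Every vertex of $Q$ has neighborhood contained in $N_G(x)=P$, and every vertex of $P$ has neighborhood contained in $N_G(y)=Q$. Consequently, in any triple $p\rightarrow q\rightsquigarrow r$, the vertices $p,r$ lie on one side and $q$ on the other. Moreover, along an A-sequence $(a_0,\dots,a_k)$ with $\{a_0,a_k\}=\{x,y\}$, Remark \ref{fasele} gives
$$N_G(a_i)=N_G(a_0)-\{a_1,a_3,\dots,a_{i-1}\}\ (i \text{ even}),\qquad N_G(a_i)=N_G(a_k)-\{a_{i+1},\dots,a_{k-1}\}\ (i \text{ odd}).$$

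\emph{Reduction to uniqueness of triples.} An $\Omega$-sequence is determined by any two consecutive entries together with the direction of the arrow between them. For example, if $a_{i}\rightarrow a_{i+1}\rightsquigarrow a_{i+2}$, then $N_G(a_{i+2})=N_G(a_i)-\{a_{i+1}\}$ is a fixed set, and since $G$ is PD at most one vertex has that neighborhood. The same holds with the roles reversed and when extending backwards. So two A-sequences through $q$ whose triples centered at $q$ coincide (up to reversal) are equal. It therefore suffices to show the following claim: \emph{for each non-nucleus $q$ there is at most one ordered pair $(p,r)$ with $N_G(r)=N_G(p)-\{q\}$.}

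\emph{Main obstacle: proving the claim.} Suppose $N_G(r)=N_G(p)-\{q\}$ and $N_G(r')=N_G(p')-\{q\}$ with $(p,r)\neq(p',r')$. Extend both triples to A-sequences $S,S'$. The displayed formulas show that every vertex of $S$ on the side of $x$ has neighborhood obtained from $N_G(x)$ by deleting an initial segment of the odd-indexed entries, and similarly on the side of $y$. I would first try to show that the neighborhoods of all vertices on each side form a single chain under inclusion. To do this, compare $N_G(p)$ and $N_G(p')$: both contain $q$, and both are of the form $N_G(x)-T$ (or $N_G(y)-T$) for sets $T$ of vertices adjacent to neither. Use the degree sequence together with the alternation, namely that $q$ is adjacent to $p$ but not $r$, and then to every vertex earlier on the $p$-side of its sequence. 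The goal is to derive either that $p$ and $p'$ are false twins or that $N_G(r)=N_G(r')$, contradicting PD.

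If the chain argument stalls, I would fall back on a counting argument. Each A-sequence runs from $x$ to $y$, each of its steps removes one neighbor of $x$ (respectively $y$), and the decreasing neighborhood chains must exhaust $N_G(x)$ and $N_G(y)$. Two distinct A-sequences sharing $q$ would then force some vertex to be deleted in two incompatible orders, which again produces false twins.
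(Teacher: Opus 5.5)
Your existence step and your reduction are both sound. Once $p$, $q$ and the arrow type are fixed, point determination forces every further entry. A saturated sequence must also stop at the first nucleus vertex it meets. So it is enough to show that the triple centred at a non-nucleus $q$ is unique.

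\textbf{The gap.} That uniqueness claim is the entire content of the lemma, and you leave it unproved; you only list strategies. Worse, the statement you plan to establish first is false. Take the $6$-cycle $x\,a_1\,b_2\,y\,a_2\,b_1$ with the chord $xy$. This graph is PD with $G^\circ=\{x,y\}$ and $G-x\simeq G-y\simeq P_5$. Both $N_x$ and $N_y$ are edgeless, and the A-sequences are $(x,a_1,a_2,y)$ and $(x,b_1,b_2,y)$. Yet $a_1,b_1\in N_G(x)$ have incomparable neighbourhoods $\{x,b_2\}$ and $\{x,a_2\}$, so the neighbourhoods on one side do not form a chain. Your fallback counting argument is too vague to check, and its premise also fails in this graph: the chain from $(x,a_1,a_2,y)$ stops at $N_G(a_2)=\{y,b_1\}$, so it removes only $a_1$ from $N_G(x)$ rather than exhausting it.

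\textbf{The missing idea.} Combine the formulas you already displayed with degrees.
\begin{enumerate}
\item Say $q\in N_G(y)$; otherwise swap $x$ and $y$. Extend the triple to an A-sequence and orient it from $x$.
\item Then $q=a_i$ with $i$ even, and the triple at $q$ is $a_{i-1}\rightsquigarrow a_i\rightarrow a_{i+1}$. Hence $r=a_{i-1}$.
\item The set $N_G(x)-N_G(q)=\{a_1,a_3,\ldots,a_{i-1}\}$ depends only on $q$.
\item Since $N_G(a_{2l-1})=N_G(a_{2l+1})-\{a_{2l}\}$, degrees increase by one along this set. So $r$ is its unique vertex of maximum degree, whichever A-sequence was used.
\item Then $p$ is the unique vertex with $N_G(p)=N_G(r)\cup\{q\}$.
\end{enumerate}

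The paper uses the same mechanism, but on whole sequences rather than triples. The degree of $u$ fixes its position in both A-sequences. Then $N_G(u)$ fixes the set of earlier odd-indexed entries, and their degree order fixes them termwise, so $a_1=b_1$. Finally, $N_G(a_1)=N_G(y)-\{a_2,a_4,\ldots,a_{k-1}\}$ fixes all even-indexed entries.
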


\begin{proof}
By Remark \ref{xyconnected}, $G$ is connected. By Lemma \ref{sequence-existence}, every non-nucleus vertex of $G$ belongs to at least one saturated $\Omega$-sequence. Since the two PD cards of $G$ are isomorphic, $\deg_G(x)=\deg_G(y)$.
 Suppose, to the contrary, that there exists a non-nucleus vertex $u$ that lies in two A-sequences $(a_0,a_1,\ldots, a_k)$ and $(b_0,b_1,\ldots, b_{k'})$. Suppose that $a_0=b_0=x$ and $a_k=b_{k'}=y$ and $u=a_i=b_j$ for some $i \neq 0,k$ and $j \neq 0,k'$. By Remark \ref{fasele}, if $i$ is even, then $\deg_G(a_0)=\deg_G(a_i) + \frac{i}{2}$ and if $i$ is odd, then $\deg_{G}(a_i)=\deg_{G}(a_k)+ \frac{i-1}{2}$. Similarly, if $j$ is even, $\deg_G(b_0)=\deg_G(b_j) + \frac{j}{2}$ and if it is odd, $\deg_{G}(b_j)=\deg_{G}(b_{k'})+ \frac{j-1}{2}$. Therefore, $|i-j|\leq 1$. By Remark \ref{fasele}, the vertices in both the sequences $(a_0,a_1,\ldots, a_k)$ and $(b_0,b_1,\ldots, b_{k'})$ with odd indices are adjacent to $a_0$ and the vertices with even indices are adjacent to $a_k$. By Theorem \ref{xya}, each non-nucleus vertex is adjacent to exactly one nucleus vertex. Therefore,
 \begin{itemize}
 	\item[($*$)] If $a_i=b_j$ for $i \neq k$ and $j \neq k'$, then  $i=j$.
 \end{itemize}
 First suppose $i=2i'$ is even, then
 $$N_G(a_0)-\{a_1, a_3 , \ldots, a_{2i'-1}\}=N_G(a_i)=N_G(b_j)=N_G(b_0)-\{b_1, b_3, \ldots , b_{2i'-1}\}.$$
 Therefore, $\{a_1, a_3 , \ldots, a_{2i'-1}\} = \{b_1, b_3, \ldots , b_{2i'-1}\}$. Now by ($*$) we have $a_1=b_1, a_3=b_3,\ldots , a_{2i'-1}=b_{2i'-1}$. Since $a_1=b_1$, then 
 $$N_G(a_k)-\{a_2 , a_4 , \cdots , a_{k-1}\} = N_G(a_1)= N_G(b_{1})= N_G(b_{k'})-\{b_2 , b_4 , \cdots , b_{k'-1}\}. $$
 Therefore, $\{a_2 , a_4 , \cdots , a_{k-1}\} = \{b_2 , b_4 , \cdots , b_{k'-1}\}$. Thus, $k=k'$ and again by ($*$), we have $a_2=b_2, a_4=b_4,\ldots , a_k=b_k$. Similarly, with $a_{k-1}=b_{k-1}$ and ($*$), we have $a_1=b_1,a_3=b_3,\ldots , a_{k-2}=b_{k-2}$. Therefore $(a_0, \ldots , a_k)=(b_0,\ldots,b_k)$.
 
 If $i$ is odd, then consider the A-sequences $(a_k,a_{k-1},\ldots,a_0)$ and $(b_{k'},b_{k'-1},\ldots,b_0)$ and do the same reasoning to conclude that $k=k'$ and $(a_0,\ldots,a_k)=(b_0,\ldots,b_k)$.
\end{proof}

\begin{lemma}\label{xy}
	Let $G$ be a point determining graph with $G^\circ=\{x,y\}$ such that $G-x \simeq G-y$. If $N_x = \alpha K_1$ for some $\alpha \geq 1$, then there exists an automorphism $\varphi :G \rightarrow G$ that maps $x$ to $y$.
\end{lemma}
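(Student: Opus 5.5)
The plan is to show that $G$ is completely determined by a family of A-sequences running from $x$ to $y$, and then to take $\varphi$ to be the map that reverses every one of these sequences simultaneously.

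\emph{Step 1: reduce to A-sequences.} By Remark \ref{xyconnected}, $G$ is connected, and by Theorem \ref{xya} every vertex other than $x,y$ is adjacent to exactly one of $x$ and $y$. Since $N_x=\alpha K_1$, the graph $N_x$ has no nontrivial component. Remark \ref{no.An} then shows that $N_y$ has none either, so $N_y=\beta K_1$. Hence every non-nucleus vertex is isolated in $N_x+N_y$. Now let $S$ be any saturated $\Omega$-sequence. It has $k\geq 3$, so its interior is nonempty. If $S$ were a belt or an N-sequence, the discussion preceding Lemma \ref{asequence} shows that its interior vertices are not isolated in $N_x+N_y$, which is impossible. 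By Lemma \ref{kodd}, every saturated $\Omega$-sequence is therefore an A-sequence. Lemmas \ref{sequence-existence} and \ref{unique-sequence} then show that $V(G)-\{x,y\}$ is partitioned by A-sequences $S_1,\dots,S_m$. We orient each as $S_t=(x=a^t_0,a^t_1,\dots,a^t_{k_t}=y)$, where $k_t$ is odd by Lemma \ref{kodd}.

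\emph{Step 2: write down all edges.} By Remark \ref{fasele}, the odd-indexed interior vertices are adjacent to $x$ and the even-indexed ones to $y$. It follows that
\[
N_G(x)=\{y\}\cup\{\text{odd-indexed interior vertices of all } S_t\},
\]
and that $N_G(y)$ is $\{x\}$ together with all even-indexed interior vertices. For an interior vertex $a^t_i$, Remark \ref{fasele} gives:
\begin{itemize}
\item[(a)] if $i$ is even, then $N_G(a^t_i)=N_G(x)-\{a^t_1,a^t_3,\dots,a^t_{i-1}\}$;
\item[(b)] if $i$ is odd, then $N_G(a^t_i)=N_G(y)-\{a^t_{i+1},\dots,a^t_{k_t-1}\}$.
\end{itemize}
Consequently there are no edges between two vertices of the same parity, because $N_x$ and $N_y$ are edgeless. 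An odd-indexed and an even-indexed vertex from different sequences are always adjacent. Within one sequence, the even vertex $a^t_i$ and the odd vertex $a^t_j$ are adjacent if and only if $j>i$. Finally, $x$ and $y$ are adjacent.

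\emph{Step 3: define and verify $\varphi$.} Set $\varphi(a^t_i)=a^t_{k_t-i}$ for every $t$ and every $0\le i\le k_t$. In particular $\varphi(x)=y$ and $\varphi(y)=x$. Since $k_t$ is odd, $\varphi$ swaps parity, so it maps $N_G(x)$ onto $N_G(y)$ and conversely. Using the edge description from Step 2, we check that $\varphi$ preserves adjacency:
\begin{itemize}
\item[(a)] pairs of the same parity remain non-adjacent;
\item[(b)] cross-sequence pairs of opposite parity remain adjacent;
\item[(c)] within a sequence, take $i$ even and $j$ odd. The images are the odd vertex $a^t_{k_t-i}$ and the even vertex $a^t_{k_t-j}$. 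By the same rule these are adjacent if and only if $k_t-i>k_t-j$, that is, if and only if $j>i$, which is exactly the condition for $a^t_i\sim a^t_j$.
\end{itemize}
Since $\varphi$ is also a bijection, it is an automorphism mapping $x$ to $y$.

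The main obstacle is Step 1, namely establishing cleanly that no belt or N-sequence can occur, so that Lemma \ref{unique-sequence} applies. This hinges on transferring edgelessness from $N_x$ to $N_y$ via the component count in Remark \ref{no.An}, and then using the degree comparison given just before Lemma \ref{asequence}. Once the partition into A-sequences is available, Steps 2 and 3 are routine bookkeeping with Remark \ref{fasele}.
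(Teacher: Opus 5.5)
Your proposal is correct and follows the paper's proof. Like the paper, you show that the non-nucleus vertices are partitioned into A-sequences running from $x$ to $y$ (via Remark \ref{no.An}, Lemma \ref{kodd}, and Lemmas \ref{sequence-existence} and \ref{unique-sequence}), and you take $\varphi$ to be the simultaneous reflection $a^t_i\mapsto a^t_{k_t-i}$. Your version is slightly more careful in two places. First, you explicitly rule out belts, which the hypothesis of Lemma \ref{unique-sequence} requires and the paper leaves implicit. Second, your ``if and only if'' description of the edge set handles non-adjacency directly, which the paper dismisses as ``mutatis mutandis''. One small point: handle $G=K_2$ separately by setting $\varphi(x)=y$, since there are then no sequences on which to define $\varphi$.
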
 
\begin{proof}
	  By Remark \ref{xyconnected}, $G$ is connected. If $G=K_2$, there is nothing to prove. So assume that $G\neq K_2$ and there exists a non-nucleus vertex in $G$. Since $N_x=\alpha K_1$, by Remark \ref{no.An}, $N_y=\alpha K_1$, too. By Lemmas \ref{asequence} and \ref{unique-sequence}, each non-nucleus vertex of $G$ belongs to exactly one A-sequence. 
	  For every A-sequence $S=(a_0,\ldots,a_k)$, define $\varphi:S \to S$ to be the reflection with respect to its virtual mid-point of $a_{\frac{k-1}{2}} a_{\frac{k+1}{2}}$ (either the cases $a_{\frac{k-1}{2}} \rightarrow a_{\frac{k+1}{2}}$ or $a_{\frac{k-1}{2}} \rightsquigarrow a_{\frac{k+1}{2}}$ may occur, depending on $k$); that is $\varphi (a_j)=a_{k-j}$, for $0\leq j \leq k$. Then $\varphi:G\to G$ is well-defined, because the endpoints of each A-sequence are $x$ and $y$, and every non-nucleus vertex belongs to exactly one A-sequence. The mapping $\varphi$ is bijective. We show that $\varphi$ is an automorphism. First let $u, v \in V(G)$ be adjacent. Four cases occur.
	  \begin{itemize}
	  	\item[Case 1.] $u,v \in \{x,y\}$. Any A-sequence has $x$ and $y$ as its endpoints and by definition, $\varphi(x)=y$ and $\varphi(y)=x$.
	  	\item[Case 2.] $u \not\in \{x,y\}$ and $v\in \{x,y\}$. In any A-sequence, the vertices with odd indices are adjacent to the initial vertex and the vertices with even indices are adjacent to the final vertex. Therefore, the vertices adjacent to $x$, map to vertices adjacent to $y$, and vice versa. 
	  	\item[Case 3.] $u, v \not\in \{x,y\}$ and they belong to the same A-sequence $S=(a_0,a_1,\ldots , a_k)$. Therefore there exist $ 1 \leq i < j \leq k-1$ such that $u=a_i$ and $v=a_j$. By Remark \ref{fasele}, adjacent vertices in $S$ have different parities and $i$ is even. We have $\varphi(a_i)= a_{k-i}$ and $\varphi(a_j)= a_{k-j}$. Hence, $k-j \leq k-i$ and $k-j$ is even. Again by Remark \ref{fasele}, $a_{k-i}$ and $a_{k-j}$ are adjacent.
	  	\item[Case 4.]  $u,v \not\in \{x,y\}$ and they belong to different A-sequences. Let $u \in S= (x, a_1,a_2 , \ldots, a_{k-1},y)$ and $v \in T= (x, b_1,b_2 , \ldots, b_{k'-1},y)$. Without loss of generality, assume that $u= a_i$ and $i$ is even (if $i$ is not even, look at the inverted sequences of $S$ and $T$). Therefore, by Remark \ref{fasele}, $N_G(a_i)=N_G(x)-\{a_1 , a_3 , \ldots , a_{i-1}\}$. Then, $v$ is adjacent to $x$, since $v$ is adjacent to $a_i$. Hence, $v=b_j$ for some odd $j$, $1 \leq j \leq  k'-1$. Also,  $k-i$ is odd and we have $N_G(a_{k-i}) = N_G(y) -\{a_{k-i+1}, a_{k-i+3}, \ldots a_{k-1}\}$.  By Case 2, $b_j$ is adjacent to $x$, therefore, $\varphi(b_j)=b_{k'-j}$ is adjacent to $y$. Thus, $\varphi (a_i)$ is adjacent to $\varphi(b_j)$.
	  \end{itemize}
	  The cases where $u$ and $v$ are not adjacent are similar, mutatis mutandis. Therefore, $\varphi$ is an automorphism.
\end{proof}

\begin{theorem}\label{xykoli}
	Let $G$ be a point determining graph with $G^\circ=\{x,y\}$ and $G-x \simeq G-y$, then there exists an automorphism $\varphi :G \rightarrow G$ that maps $x$ to $y$.
\end{theorem}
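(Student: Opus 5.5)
We extend the reflection construction of Lemma \ref{xy} to the general case. By Remark \ref{xyconnected}, $G$ is connected. By Theorem \ref{xya}, $x\sim y$ and every other vertex is adjacent to exactly one of $x,y$. By Lemma \ref{n1}, $N_x=\alpha K_1+\mathcal{A}_{k_1}+\cdots+\mathcal{A}_{k_m}$. Symmetrically, $N_y=\beta K_1+\mathcal{A}_{l_1}+\cdots+\mathcal{A}_{l_{m'}}$. By Remark \ref{no.An}, $m=m'$. The vertex set then splits into three parts: $\{x,y\}$; the isolated vertices of $N_x+N_y$ other than $x,y$; and the vertices of the nontrivial components. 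By Lemma \ref{asequence}, the second part consists exactly of the non-nucleus vertices lying on A-sequences. The remaining vertices lie only on belts and N-sequences.

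\textbf{Step 1 (the isolated part).} Let $G'$ be the subgraph induced by $x$, $y$ and the isolated vertices of $N_x+N_y$. I would check that the A-sequences of $G$ are A-sequences of $G'$. The relations $N_G(a_{i+2})=N_G(a_i)-\{a_{i+1}\}$ restrict to $G'$ provided every vertex of a nontrivial component adjacent to $a_i$ is also adjacent to $a_{i+2}$. This follows from Corollary \ref{daraje} and Lemma \ref{eqG}(iii),(iv): each nontrivial-component vertex $w\in N_x$ satisfies $N_G(w)\supseteq N_G(y)$, and these are the only such vertices. I would then rerun the argument of Lemma \ref{unique-sequence} to see that each isolated vertex lies on exactly one A-sequence. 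Define $\varphi$ on this part as the reflection $a_j\mapsto a_{k-j}$. The four cases of Lemma \ref{xy} show that $\varphi$ preserves adjacency inside this part, and also adjacency to $x$ and $y$.

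\textbf{Step 2 (the components).} Each component $\mathcal{A}_n\subseteq N_x$ has, by Lemma \ref{eqG}, its whole neighbourhood outside itself determined. Indeed, $N_G(v_i)=N_G(y)\cup\{u_i,\dots,u_n\}$ and $N_G(u_i)=N_G(y)\cup\{v_1,\dots,v_i\}$. So every vertex of the component sees exactly $N_G(y)$ outside the component, which is $\{x\}$ together with the part of $N_y$ adjacent to $y$'s neighbourhood. In other words, the component is joined completely to $N_G(y)-\{x\}$, and similarly components of $N_y$ are joined completely to $N_G(x)-\{y\}$. Two components in the same $N_x$ are then nonadjacent, while a component of $N_x$ and a component of $N_y$ are completely joined, since each lies in the other's prescribed neighbourhood.

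It remains to pair the components. By Remark \ref{no.An}, $G-x$ and $G-y$ have the same multiset of component sizes. I would compare degree counts in $G-x$ versus $G-y$, using Corollary \ref{daraje}(ii): degrees equal $d+\deg_{\mathcal{A}_n}(z)$, decreased by one when a neighbour is deleted. From this I would derive that the multisets $\{k_i\}$ and $\{l_j\}$ coincide and that $\alpha=\beta$. Alternatively, I would count, via Remark \ref{fasele}, how many vertices of degree $d+j$ occur for each $j$. This multiset matching is the step I expect to be the main obstacle. If the degree counting is insufficient, I would use that the isomorphism $G-x\simeq G-y$ must send $y$ to a vertex of degree $d-1$ in $G-x$. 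Such a vertex is either $y$ itself or an isolated vertex of $N_x$ by Corollary \ref{daraje}, and tracking where the components go under the isomorphism should give the matching.

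\textbf{Step 3 (gluing).} Choose a bijection between the components of $N_x$ and those of $N_y$ that preserves $n$. Map $\mathcal{A}_n\subseteq N_x$ to its partner $\mathcal{A}_n\subseteq N_y$ via $u_i\mapsto u_i'$ and $v_i\mapsto v_i'$, and symmetrically in the other direction. Combine this with $\varphi$ from Step 1 and with $x\leftrightarrow y$. Adjacency between a component vertex and any other vertex is determined by $N_G(y)$ or $N_G(x)$. Since $\varphi$ swaps these two sets by Step 1, the combined map is an automorphism.
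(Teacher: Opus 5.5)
Your proposal is correct, but it takes a different route from the paper. The paper never builds the automorphism globally. When $N_x=\alpha K_1$ it quotes Lemma \ref{xy}. When $N_x$ has an edge, it takes an arbitrary isomorphism $\varphi':G-x\to G-y$ and proves $\varphi'(y)=x$. By Corollary \ref{daraje}, the only other degree-compatible images are isolated vertices $z$ of $N_x$. Such a $z$ is not a nucleus of $G-y$, so $y$ would not be a nucleus of $G-x$. That produces $r,s$ with $N_G(r)-\{x\}=N_G(s)-\{y\}$, forcing pendant vertices, which Lemma \ref{eqG} rules out. Theorem \ref{xya} then lets $\varphi'$ extend by $x\mapsto y$.

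You instead decompose $G$ into three parts: $\{x,y\}$, the A-sequence part, and the components $\mathcal{A}_n$. By Lemma \ref{eqG} the components are modules whose outside neighbourhood is exactly $N_G(y)$ (resp. $N_G(x)$). You then glue the reflection on A-sequences to a size-preserving matching of components. The paper's argument is shorter and avoids both uniqueness of A-sequences and component matching, because the isomorphism is handed to it. Yours treats both cases uniformly and gives an explicit automorphism. It also shows, in effect, that $G$ is determined by its A-sequence lengths and the orders of the components of $N_x$.

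Two remarks on your write-up. First, the passage to $G'$ is a detour. Intersecting a set equality with $V(G')$ is automatic, and $G'$ is not known to satisfy the hypotheses of Lemmas \ref{unique-sequence} and \ref{xy} (notably $G'-x\simeq G'-y$). So those arguments must be run in $G$ itself. This works:
the proof of Lemma \ref{unique-sequence} never uses that all saturated sequences are A-sequences, and Case 4 of Lemma \ref{xy} needs only $N_G(a_i)=N_G(x)-\{a_1,a_3,\ldots,a_{i-1}\}$ for even $i$.

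A cleaner uniqueness argument is also available. For even $i$, the non-neighbours of $a_i$ among isolated vertices of $N_x$ are exactly $a_1,a_3,\ldots,a_{i-1}$, and symmetrically for odd indices. Hence the interior of an A-sequence is a connected component of the bipartite non-adjacency graph between the isolated vertices of $N_x$ and those of $N_y$, and positions within it are fixed by side and degree.

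Second, the step you flag as the main obstacle is settled by the degree count you propose. Let $p_j,q_j$ be the numbers of components $\mathcal{A}_n$ of $N_x$, $N_y$ with $n\ge j$. In $G-x$ the vertices of degree at least $d$ are exactly the component vertices, and each $\mathcal{A}_n$ has exactly two vertices of each degree $1,\ldots,n$. So $G-x$ has $2p_1$ vertices of degree $d$ and $2p_{j+1}+2q_j$ of degree $d+j$ for $j\ge 1$. Likewise $G-y$ has $2q_1$ and $2q_{j+1}+2p_j$. Equating gives $p_1=q_1$, and then $p_j=q_j$ for all $j$ by induction. Your fallback of tracking $\varphi'(y)$ is therefore unnecessary; it is essentially the paper's proof. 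Also, $\alpha=\beta$ needs no separate count: it follows from the reflection, since each A-sequence has $(k-1)/2$ interior vertices on each side.
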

\begin{proof}
	By Remark \ref{xyconnected}, $G$ is connected. Let $N_x=\alpha K_1+H$ where $H$ has no isolated vertices. By Lemma \ref{nKH}, $\alpha \geq 1$ and $H$ is PD. If $H=\emptyset$, i.e. $N_x= \alpha K_1$, then by Lemma \ref{xy}, the automorphism $\varphi$ with the desired property $\varphi(x)=y$ exists.
	
	Now let $H\neq \emptyset$ and $\varphi' : G-x \rightarrow G-y $ be an isomorphism. We show that $\varphi'(y)=x$. If $\varphi'(y)=z \neq x$, then $\deg_{G-y}(z)= \deg_G(y)-1$. By Corollary \ref{daraje}, the only valid candidates for $z=\varphi'(y)$ are the isolated vertices in $N_x$. For an isolated vertex $v$ in $N_y$, we have $\deg_G(v) < \deg_G(x)=\deg_G(y)$, therefore, $\deg_{G-y}(v) < \deg_{G}(y)-1$. The graph $G-z$ is not PD, hence there exist vertices $p,q \in G-z$ such that $N_G(p)=N_G(q)-\{z\}$. The vertex $z$ is isolated in $N_x$ and $z \neq y$, thus $q \in N_y$ and $q \neq y$. Since $q$ is adjacent to $y$, we have $p \neq y$ and hence $N_{G-y}(p)=N_{G-y}(q)-\{z\}$.
	
	The vertex $z$ is not nucleus in $G-y$, therefore $y \notin (G-x)^\circ$ and there exist vertices $r,s \in G-x-y$ such that $N_{G-x}(r) = N_{G-x}(s) -\{y\}$. By Theorem \ref{xya}, $r$ and $s$ are adjacent to exactly one of $x$ and $y$. Therefore, $N_G(r)-\{x\}=N_G(s)-\{y\}$.
	
	Assume that $w \in N_G(r)-\{x\}=N_G(s)-\{y\}$. By Theorem \ref{xya}, $w$ is in exactly one of $N_x$ or $N_y$. Without loss of generality suppose that $w \in N_x$. Since $w,r \in N_x$, the vertex $w$ is not isolated in $N_x$ and by Lemma \ref{n1} there exists some component $\mathcal{A}_n$ of $N_x$ containing $w$ and $r$. By Lemma \ref{eqG}, $N_G(y) \subseteq N_G(r)$, therefore $s$ is adjacent to $r$ and this contradicts the equality $N_G(r)-\{x\}=N_G(s)-\{y\}$. Hence, $N_G(r)-\{x\}=N_G(s)-\{y\}=\emptyset$ and the vertices $r$ and $s$ are pendant vertices in $G$ adjacent respectively to $x$ and $y$. The only possible case that $r$ and $s$ are pendant vertices in $G$ is that $N_x$ and $N_y$ have only isolated vertices. This contradicts the assumption $H \neq \emptyset$. Therefore $\varphi'(y)=x$.

	Now we extend $\varphi'$ to $\varphi : G\rightarrow G$ by $\varphi(x)=y$ and show that $\varphi$ is an automorphism. Clearly $\varphi$ is bijective. For a vertex $t \in G-x$, the vertex $y$ is adjacent to $t$ if and only if $x$ is adjacent to $\varphi'(t)$. Therefore $\varphi'(N_{G-x}(y))= N_{G-y}(x)$. For $t \not\in N_{G-x}(y) $, i.e $t \in N_x$, we have $\varphi'(t) \in N_y$. To conclude that $\varphi$ is an automorphism, it suffices to show that for any vertex $v$ in $G$, $v$ is adjacent to $x$ if and only if $\varphi(v)$ is adjacent to $y=\varphi(x)$.

	If $v$ and $x$ are adjacent, then $v \in N_x$ and hence $\varphi(v) \in N_y$. This means $\varphi(v)$ and $\varphi(x)=y$ are adjacent. If $v$ and $x$ are not adjacent, then $v \in N_y$ and hence $\varphi(v) \in N_x$. This means $\varphi(v)$ and $\varphi(x)=y$ are not adjacent.
\end{proof}

\section{Recognizability of Being Point Determinating}\label{main}

In this section, we show that the property of being point determining is recognizable.

\begin{lemma}\label{2pd}
	Let $G$ be a graph that is not point determining. Then either exactly two cards of $G$ are point determining or none of them are. Furthermore, if $G$ has exactly two point determining cards, then they are isomorphic and their corresponding deleted vertices are the only pair of false twins in $G$.
\end{lemma}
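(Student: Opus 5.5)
Suppose some card $G-v$ is point determining. Since $G$ is not point determining, there are distinct vertices $a,b$ with $N_G(a)=N_G(b)$. The plan is to show first that $v\in\{a,b\}$. If $v\notin\{a,b\}$, then in $G-v$ we still have $N_{G-v}(a)=N_G(a)-\{v\}=N_G(b)-\{v\}=N_{G-v}(b)$, so $a,b$ remain false twins in $G-v$. This contradicts $G-v$ being point determining. Hence every pair of false twins in $G$ contains $v$.

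Next we use the equivalence relation ``having the same neighborhood''. Its classes are the sets of mutually false twin vertices. Since $G$ is not PD, some class has size at least $2$. By the previous paragraph, every such nontrivial class contains $v$, so there is exactly one nontrivial class, and it contains $v$. If that class had three members $v,b,c$, then $b,c$ would be false twins avoiding $v$, again a contradiction. So the class is exactly $\{a,b\}$ with $v\in\{a,b\}$, and $\{a,b\}$ is the unique pair of false twins in $G$.

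This shows that the only possible point-determining cards are $G-a$ and $G-b$. Since we assumed one of them is PD, it remains to show that both are PD and that they are isomorphic. The transposition of $a$ and $b$ fixing all other vertices is an automorphism of $G$, because $N_G(a)=N_G(b)$ and $a,b$ are non-adjacent. This automorphism maps $G-a$ onto $G-b$, so $G-a\simeq G-b$. Being PD is an isomorphism invariant, so both cards are PD.

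Consequently, if any card is PD, exactly two are, namely $G-a$ and $G-b$ for the unique false-twin pair $\{a,b\}$, and these two cards are isomorphic; otherwise no card is PD. The main obstacle is the counting step, which must rule out false-twin pairs not containing the deleted vertex, classes of size three, and two disjoint classes. The single observation that deleting a vertex outside a false-twin pair preserves that pair handles all three cases. The phrase ``exactly two cards'' refers to two positions in the deck, even though the two cards are isomorphic.
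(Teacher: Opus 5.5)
Your proof is correct. Its first half follows the paper: both rest on the observation that deleting a vertex outside a false-twin pair leaves that pair intact. You package this through the equivalence classes of ``same neighborhood''; the paper instead splits into the cases where a second false-twin pair overlaps $\{x,y\}$ or is disjoint from it.

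The two routes differ in how the surviving cards are shown to be point determining. The paper proves directly that if $\{x,y\}$ is the only pair of false twins, then $G-x$ and $G-y$ are PD. Two vertices $p,q$ that are twins in $G-x$ but not in $G$ must differ exactly at $x$, and then $y$, which has the same neighborhood as $x$, separates them in $G-x$.

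You instead assume some card is PD and show it must be $G-a$ or $G-b$. You then transfer point-determinacy to the other card through the transposition $a\leftrightarrow b$. That automorphism also gives $G-a\simeq G-b$, which the paper asserts without spelling out the map.

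Your version is shorter and avoids the separate separation argument. The paper's version additionally proves a converse: a non-PD graph with a unique pair of false twins always has PD cards, so ``exactly two PD cards'' is equivalent to ``exactly one false-twin pair''. The statement does not require that direction. The later uses of the lemma, in Lemma~\ref{deck2pd} and Theorem~\ref{maintheorem}, need only what you prove.
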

\begin{proof}
	The graph $G$ is not point determining, therefore there exist non-adjacent vertices $x,y \in V(G)$ such that $N_G(x)=N_G(y)$. For every $z \in V(G)-\{x,y\}$, since $N_G(x)=N_G(y)$, $z$ is either adjacent to both $x$ and $y$ or to none of them. Therefore, $x$ and $y$ are false twins in the cards $G-z$ and these cards are not PD. 

	First suppose that there are false twins $u,v \in V(G)$ such that $\{u,v\} \neq \{x,y\}$. If $\{u,v\} \cap \{x,y\} \neq \emptyset$, without loss of generality, assume that $u=x$. Then $N_G(y)=N_G(x)=N_G(u)=N_G(v)$. In other words, $v$ and $y$ are also false twins in $G$. Therefore, $u$ and $v$ are false twins of $G-y$ and the vertices $v$ and $y$ are false twins of $G-x$. If $\{u,v\} \cap \{x,y\} = \emptyset$, then $u$ and $v$ are false twins in both $G-x$ and $G-y$. In either case, $G$ has no PD cards.
	
	Second suppose that $x$ and $y$ are the only false twins of $G$. Then we show that $G$ has only $G-x$ and $G-y$ as its PD cards. Assume $p$ and $q$ are false twins in $G-x$.  Since $x$ and $y$ are the only false twins of $G$, exactly one of $p$ or $q$, say $p$, is adjacent to $x$. Hence, $y$ is adjacent to $p$ and not to $q$. Thus, $N_{G-x}(q)=N_{G-x}(p)-\{y\}$, which contradicts the equality $N_{G-x}(q)=N_{G-x}(p)$. Consequently, the card $G-x$ is PD, and the case for $G-y$ is similar.
	
	Now suppose that $G$ has exactly two point determining cards. Since $G$ is not PD, there exist false twins $a$ and $b$ in $G$. We showed that, the only cards that are possibly PD, are $G-a$ and $G-b$. Thus, these are the point determining cards of $G$ and $a$ and $b$ are the only pair of false twins of $G$. Finally, since $N_G(a)=N_G(b)$, the cards $G-a$ and $G-b$ are isomorphic.
\end{proof}
In other words, Lemma \ref{2pd} says that if a deck of cards has just one or at least three PD cards, then the solution graph is PD. Furthermore, by Theorem \ref{nucleus}, if a deck has no PD cards, the solution graph is not PD. 

Let $G$ be a graph and $x \in V(G)$. A blow-up of $x$ in $G$, is adding a new vertex $x'$ to $G$ with the same neighborhood as $x$. We denote the resulting graph by $G^+(x)$. Clearly, $x'$ is a false twin of $x$ in the new graph. The converse is also true, if $x$ and $x'$ are false twin in $G$, then $G=(G-x')^+(x)$.

\begin{lemma} \label{deck2pd}
Let $G$ be a graph with exactly two PD cards, $G-x$ and $G-y$. If $G-x \simeq G-y$, then $G$ is not point determining if and only if there exists a vertex $v \in G-x$ with $\deg_G(x)=\deg_{G-x}(v)$ such that $\Delta(G)=\Delta((G-x)^+(v))$.
\end{lemma}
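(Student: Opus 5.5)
The plan is to prove the forward direction directly from Lemma \ref{2pd}. For the converse I will show that, when $G$ is PD, every candidate $v$ in the statement forces $\Delta((G-x)^+(v))>\Delta(G)$, using the structure theory of Section \ref{G01}.

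\emph{($\Rightarrow$)} Suppose $G$ is not PD. By Lemma \ref{2pd}, the deleted vertices $x,y$ of the two PD cards are the only pair of false twins. Then $y\in G-x$ and $G=(G-x)^+(y)$. Since false twins are non-adjacent, $\deg_{G-x}(y)=\deg_G(y)=\deg_G(x)$. Taking $v=y$ gives $\Delta(G)=\Delta((G-x)^+(y))$ trivially.

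\emph{($\Leftarrow$)} Suppose instead that $G$ is PD. Its PD cards are exactly those of its nucleus vertices, so $G^\circ=\{x,y\}$ with $G-x\simeq G-y$. By Remark \ref{xyconnected}, $G$ is connected, and by Theorem \ref{xya}, $x\sim y$ and $V(G)-\{x,y\}$ splits into $N_x-\{y\}$ and $N_y-\{x\}$. Set $d=\deg_G(x)=\deg_G(y)$. I will show no $v$ as in the statement satisfies $\Delta((G-x)^+(v))=\Delta(G)$.

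First I locate the candidates $v\in G-x$ with $\deg_{G-x}(v)=d$.
\begin{itemize}
\item Vertices of $N_y$ other than $x$ keep their degree in $G-x$. By Corollary \ref{daraje} and Remark \ref{no.An}, none of them has degree $d$ in $G$: only $x,y$ have degree $d$, isolated vertices have smaller degree, and component vertices have degree $\geq d+1$.
\item The vertex $y$ has $\deg_{G-x}(y)=d-1$.
\item Vertices of $N_x$ lose one in degree, so we need $\deg_G(v)=d+1$. By Corollary \ref{daraje}(ii), $v$ must be a pendant vertex of a nontrivial component $\mathcal{A}_n$ of $N_x$.
\end{itemize}
If $N_x$ has no edges, there is no candidate and we are done. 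Otherwise $v$ is $v_n$ or $u_1$ of some $\mathcal{A}_n$. By Lemma \ref{eqG}(iii),(iv), $N_G(v)=N_G(y)\cup\{w\}$ with $w\in\mathcal{A}_n$. Hence in $(G-x)^+(v)$ the new vertex $v'$ is adjacent to every vertex of $N_y-\{x\}$, raising each of their degrees by one.

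The main step is to show that $\Delta(G)$ is attained at some vertex of $N_y-\{x\}$. Write $M=\Delta(G)$. By Remark \ref{no.An}, $N_y$ also has nontrivial components, so Corollary \ref{maxdaraje}(ii) applies and $M$ is attained at a non-isolated vertex of $N_x+N_y$. If it were attained only in $N_x$, then:
\begin{itemize}
\item in $G-x$ every vertex has degree $<M$, since vertices of $N_x$ drop by one and $y$ has degree $d-1$;
\item in $G-y$ the maximizer in $N_x-\{y\}$ keeps degree $M$.
\end{itemize}
This contradicts $G-x\simeq G-y$. So some $z\in N_y-\{x\}$ has $\deg_G(z)=M$. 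Then $z$ keeps degree $M$ in $G-x$ and gets degree $M+1$ in $(G-x)^+(v)$. Therefore $\Delta((G-x)^+(v))\geq M+1>\Delta(G)$, completing the converse.

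The delicate points are the case check that candidates are exactly the pendant component vertices in $N_x$, and the degree-sequence symmetry argument placing a maximum-degree vertex in $N_y$.
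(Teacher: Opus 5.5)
Your proof is correct. The forward direction and the identification of the candidates $v$ as the pendant vertices of nontrivial components of $N_x$ match the paper. The converse takes a different route at the key step: showing that the blown-up copy of $v$ hits a vertex of maximum degree.

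The paper does this structurally.
\begin{itemize}
\item It invokes Theorem~\ref{xykoli} to get $N_x\simeq N_y$, so the largest component $\mathcal{A}_n$ also occurs in $N_y$.
\item By Corollary~\ref{maxdaraje} this gives $\Delta(G)=\deg_G(u_n)=n+\deg_G(x)$.
\item Lemma~\ref{eqG}(iv), applied in $N_y$ with the roles of $x$ and $y$ swapped, gives $N_G(x)\subseteq N_G(u_n)$. Hence $v\sim u_n$, and the new vertex is adjacent to $u_n$ too.
\end{itemize}

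You instead argue as follows.
\begin{itemize}
\item You compare the two isomorphic cards directly. If $M=\Delta(G)>d$ were attained only in $N_x-\{y\}$, then $\Delta(G-x)<M=\Delta(G-y)$, which is impossible.
\item You apply Lemma~\ref{eqG}(iii),(iv) in the component of $v$ itself to get $N_G(y)\subseteq N_G(v)$.
\item So the copy of $v$ is adjacent to every vertex of $N_y-\{x\}$, in particular to whichever maximizer lies there.
\end{itemize}

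Your route avoids Theorem~\ref{xykoli}, and with it the whole $\Omega$-sequence machinery (Lemmas~\ref{asequence}, \ref{unique-sequence}, \ref{xy}). It needs only Theorem~\ref{xya}, Corollary~\ref{daraje} and Lemma~\ref{eqG}. The paper's route uses that heavier theorem but names the maximizer explicitly and computes $\Delta(G)=n+\deg_G(x)$.

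A minor point: your appeal to Remark~\ref{no.An} and Corollary~\ref{maxdaraje}(ii) is unnecessary. The bound $M\geq d+1>d$ already follows from $\deg_G(v)=d+1$, and your degree comparison then forces a maximizer in $N_y-\{x\}$ by itself.
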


\begin{proof}
First assume that $G$ is not point determining. By Lemma \ref{2pd}, $G$ has exactly one pair of false twins. The cards of these false twins are PD cards. So $x$ and $y$ are the unique pair of false twins of $G$. Since $N_G(x)=N_G(y)$, then $\deg_G(x)=\deg_G(y)$. The graph $(G-x)^+(y)$ is isomorphic to $G$. Therefore, $\Delta(G)=\Delta((G-x)^+(y))$.

Second assume that $G$ is point determining, then $G^\circ=\{x,y\}$. By Corollary \ref{daraje}, the only vertices $z$ such that $\deg_{G-x}(z)=\deg_G(x)$ are the pendant vertices in $N_x$. If $N_x=\alpha K_1$ for some $\alpha \geq 1$, then the lemma holds. If $N_x$ has an edge, let $v$ be a pendant vertex in $N_x$. By Lemma \ref{existence}, $N_G(y)=N_G(v)-\{u\}$ for some $u \in N_x$ ($u$ is a nucleus vertex in non-empty PD component of $N_x$). Since $G-x \simeq G-y$, by Theorem \ref{xykoli}, $N_x \simeq N_y$. By Corollary \ref{maxdaraje}, there exists $n \geq 1$ such that $\mathcal{A}_n$ (according to its definition with $V(\mathcal{A}_n)=\{u_1,\ldots,u_n,v_1,\ldots,v_n\}$) is the largest connected PD component of $N_y$ and $\Delta(G)=\deg_G(u_n)= n+ \deg_G(x)$. By (iv) of Lemma \ref{eqG}, $N_G(x)=N_G(u_n)-\{v_1 , \ldots, v_n\}$. Therefore, $v$ and $u_n$ are adjacent. Since $x$ and $u_n$ are not adjacent, $\deg_G(u_n)=\deg_{G-x}(u_n)$. Consider $H=(G-x)^+(v)$. Since $v$ and $u_n$ are adjacent in $G$, the new vertex in $H$ is also adjacent to $u_n$. Therefore, $\deg_H(u_n)=\deg_G(u_n)+1$, in other words, $\Delta(H)=\Delta(G)+1$.
\end{proof}

Now all ingredients are ready to prove the main theorem.

\begin{theorem}\label{maintheorem}
	Point determining property is recognizable for graphs of order at least three.
\end{theorem}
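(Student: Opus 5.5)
We must show that from the deck of $G$ (order at least three) one can decide whether $G$ is PD. The plan is to split according to the number of PD cards in the deck. This number is visible from the deck, since each card is a labelled-free graph whose PD-ness can be checked.

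\emph{Zero PD cards.} By Theorem \ref{nucleus}, a nontrivial PD graph always has a PD card. So if the deck has no PD card, $G$ is not PD.

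\emph{One PD card, or at least three.} Lemma \ref{2pd} says a non-PD graph has either zero or exactly two PD cards. Hence in either case $G$ is PD.

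\emph{Exactly two PD cards $C_1=G-x$ and $C_2=G-y$.} If $C_1\not\simeq C_2$, Lemma \ref{2pd} again forces $G$ to be PD, since a non-PD graph with two PD cards has them isomorphic. So assume $C_1\simeq C_2$; this is the only genuinely ambiguous case. Here we apply the criterion of Lemma \ref{deck2pd}: $G$ is not PD iff some $v\in G-x$ satisfies $\deg_{G-x}(v)=\deg_G(x)$ and $\Delta(G)=\Delta((G-x)^+(v))$. We check that every quantity in this criterion is recognizable.
\begin{itemize}
\item The card $G-x=C_1$ is given as an unlabelled graph.
\item $\deg_G(x)=|E(G)|-|E(C_1)|$, and $|E(G)|$ is recognizable by Kelly's Lemma.
\item $\Delta(G)$ is recognizable because the degree sequence is.
\end{itemize}
Thus one ranges over all vertices $v$ of $C_1$ with $\deg_{C_1}(v)=|E(G)|-|E(C_1)|$, forms the blow-up $C_1^+(v)$ abstractly, and compares its maximum degree with $\Delta(G)$. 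The condition ``there exists such a $v$'' is invariant under the choice of isomorphism of $C_1$, since blow-ups of corresponding vertices are isomorphic. So the answer is determined by the deck.

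The main obstacle is this last case. The substantive work, namely that in the PD case with $G^\circ=\{x,y\}$ no such $v$ exists, was already done in Lemma \ref{deck2pd} using Theorem \ref{xykoli} and Corollaries \ref{daraje} and \ref{maxdaraje}. What remains is to confirm that each hypothesis of Lemma \ref{deck2pd} is read off the deck and not from $G$ itself, in particular that $\deg_G(x)$ is obtained as $|E(G)|-|E(C_1)|$. One must also check that order at least three is only used to make the deck meaningful. Put together, the four cases give a procedure deciding PD-ness from the deck alone, which proves Theorem \ref{maintheorem}.
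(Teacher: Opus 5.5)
Your proposal is correct and follows the paper's proof essentially step for step. It uses the same case split on the number of PD cards, settled by Theorem~\ref{nucleus}, Lemma~\ref{2pd} and, in the isomorphic two-card case, Lemma~\ref{deck2pd}; your check that $\deg_G(x)=|E(G)|-|E(C_1)|$ and $\Delta(G)$ are read off the deck makes explicit what the paper leaves implicit.

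One correction to your closing remark: order at least three is not just there to make the deck meaningful. It is exactly what makes $|E(G)|=\frac{1}{n-2}\sum_{v}|E(G-v)|$, and hence $\deg_G(x)$ and $\Delta(G)$, recognizable. For $n=2$ the theorem fails, since $K_2$ (PD) and $2K_1$ (not PD) have the same deck.
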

\begin{proof}
	Let $G$ be a graph of order at least $3$. Let $\mu$ be the number of PD cards of $G$. If $\mu =1$ or $\mu \geq 3$, then by Lemma \ref{2pd}, $G$ is a PD graph with $|G^\circ|=\mu$. If $\mu =0$, then by Theorem \ref{nucleus}, $G$ is a non-PD graph. 
	So, let $\mu =2$ and let $G-a$ and $G-b$ be the PD cards of $G$. By Lemma \ref{2pd}, if $G-a \not\simeq G-b$, then $G$ is PD with $|G^\circ|=2$.
	The last case is when $G-a \simeq G-b$. By Lemma \ref{deck2pd}, if there exists $v \in G-a$ such that $\deg_{G-a}(v)=\deg_G(a)$ and $\Delta((G-a)^+(v))=\Delta(G)$, then $G$ is non-PD; otherwise $G$ is PD with $|G^\circ|=2$.
\end{proof}

Proof of Theorem \ref{maintheorem}, is a procedure to establish whether a graph is point determining by looking at its deck.

\begin{example}\label{mesalpd}
	In Figure \ref{shklpdxy}, the maximum vertex degree of the solution is $4$ and the deck has exactly two PD cards which are isomorphic and their corresponding deleted vertices have degree $3$. Blowing-up any vertex of degree $3$ in these PD cards results in a graph which its maximum degree is strictly greater than $4$. This implies that the solution graph is PD.
\end{example}

\begin{figure}[h] 
	\begin{center}
		\begin{tikzpicture}[
			vertex/.style={
				circle,
				draw,
				fill=white,
				inner sep=1.5pt,
				minimum size=6pt
			},
			edge/.style={line width=0.9pt},
			curved/.style={
				line width=0.9pt
			},
			separator/.style={
				line width=0.7pt,
				dash pattern=on 3pt off 2pt on 1pt off 2pt
			}]
			
			
			\node[vertex] (a1) at (-0.45,0.225) {};
			\node[vertex] (a2) at (0.65,0.725) {};
			\node[vertex] (a3) at (0.65,-0.275) {};
			\node[vertex] (a4) at (2.05,0.725) {};
			\node[vertex] (a5) at (2.05,-0.275) {};
			
			\draw[edge] (a1)--(a2);
			\draw[edge] (a1)--(a3);
			
			\draw[edge] (a2)--(a3);
			\draw[edge] (a2)--(a4);
			\draw[edge] (a3)--(a5);
			
			\draw[edge] (a2)--(a5);
			\draw[edge] (a3)--(a4);
			
			\draw[edge] (a4)--(a5);

			
			\node[vertex] (b1) at (3.7,0.5) {};
			\node[vertex] (b2) at (4.9,0) {};
			\node[vertex] (b3) at (4.9,1) {};
			\node[vertex] (b4) at (6.2,1) {};
			\node[vertex] (b5) at (7.5,0.5) {};
			
			\draw[edge] (b1)--(b2);
			\draw[edge] (b1)--(b3);
			\draw[edge] (b2)--(b3);
			\draw[edge] (b3)--(b4);
			\draw[edge] (b2)--(b4);
			\draw[edge] (b4)--(b5);
			
			\draw[curved]
			(b1) to[out=-70,in=-110] (b5);
			
			\node[vertex] (c1) at (8.5,0.5) {};
			\node[vertex] (c2) at (9.7,0) {};
			\node[vertex] (c3) at (9.7,1) {};
			\node[vertex] (c4) at (11,1) {};
			\node[vertex] (c5) at (12.3,0.5) {};
			
			\draw[edge] (c1)--(c2);
			\draw[edge] (c1)--(c3);
			\draw[edge] (c2)--(c3);
			\draw[edge] (c3)--(c4);
			\draw[edge] (c2)--(c4);
			\draw[edge] (c4)--(c5);

			\draw[curved]
			(c1) to[out=-70,in=-110] (c5);
			
			
			\node[vertex] (d1) at (-0.45,-2.225) {};
			\node[vertex] (d2) at (0.65,-1.725) {};
			\node[vertex] (d3) at (0.65,-2.725) {};
			\node[vertex] (d4) at (2.05,-1.725) {};
			\node[vertex] (d5) at (2.05,-2.725) {};
			
			\draw[edge] (d1)--(d2);
			\draw[edge] (d1)--(d3);
			
			\draw[edge] (d2)--(d3);
			\draw[edge] (d2)--(d4);
			\draw[edge] (d3)--(d5);
			
			\draw[edge] (d2)--(d5);
			\draw[edge] (d3)--(d4);
			
			\draw[edge] (d4)--(d5);
			
			
			\node[vertex] (e1) at (3.7,-2) {};
			\node[vertex] (e2) at (4.9,-1.5) {};
			\node[vertex] (e3) at (4.9,-2.5) {};
			\node[vertex] (e4) at (6.2,-1.5) {};
			\node[vertex] (e5) at (7.5,-2) {};
			
			\draw[edge] (e1)--(e2);
			\draw[edge] (e1)--(e3);
			\draw[edge] (e2)--(e3);
			
			\draw[edge] (e2)--(e4);
			\draw[edge] (e3)--(e4);
			
			\draw[edge] (e4)--(e5);
			
			\draw[curved]
			(e1) to[out=-70,in=-110] (e5);
			
			
			\node[vertex] (f1) at (8.5,-2) {};
			\node[vertex] (f2) at (9.7,-1.5) {};
			\node[vertex] (f3) at (9.7,-2.5) {};
			\node[vertex] (f4) at (11.1,-1.5) {};
			\node[vertex] (f5) at (12.3,-2) {};
			
			\draw[edge] (f1)--(f2);
			\draw[edge] (f1)--(f3);
			\draw[edge] (f2)--(f3);
			
			\draw[edge] (f2)--(f4);
			\draw[edge] (f3)--(f4);
			
			\draw[edge] (f4)--(f5);
			
			\draw[curved]
			(f1) to[out=-70,in=-110] (f5);
			
			
			\draw[separator] (3.2,-3.25)--(3.2,1.25);
			\draw[separator] (8,-3.25)--(8,1.25);
			\draw[separator] (-1.6,-1)--(12.8,-1);
			
		\end{tikzpicture}
		\caption{A deck with exactly two PD cards (Example \ref{mesalpd})}\label{shklpdxy}
	\end{center}
\end{figure}
\FloatBarrier

\begin{example} \label{mesalnonpd}
	In contrast to Example \ref{mesalpd}, in the graph depicted in Figure \ref{shklnonpdxy}, the maximum degree is $4$ and it has only two PD isomorphic cards which their corresponding deleted vertices have degree 1. Blowing up the pendant vertex in these PD cards, results in a graph with the same maximum degree as the solution graph. Consequently, the solution graph is not PD.
\end{example}

\begin{figure}[ht]
	\begin{center}
		\begin{tikzpicture}[
			vertex/.style={circle, draw, fill=white, inner sep=2.2pt},
			line/.style={thick}
			]
			
			\node[vertex] (a1) at (0,2) {};
			\node[vertex] (a2) at (1.5,2) {};
			\node[vertex] (a3) at (.75,1) {};
			\node[vertex] (a4) at (0.75,0) {};
			
			\draw[line] (a1)--(a2);
			\draw[line] (a1)--(a3);
			\draw[line] (a2)--(a3);
			\draw[line] (a3)--(a4);
			
			\node[vertex] (b1) at (3.5,2) {};
			\node[vertex] (b2) at (5,2) {};
			\node[vertex] (b3) at (4.25,1) {};
			\node[vertex] (b4) at (4.25,0) {};
			
			\draw[line] (b1)--(b2);
			\draw[line] (b1)--(b3);
			\draw[line] (b2)--(b3);
			\draw[line] (b3)--(b4);
			
			\node[vertex] (c1) at (7,2) {};
			\node[vertex] (c2) at (8.5,2) {};
			\node[vertex] (c3) at (7,0) {};
			\node[vertex] (c4) at (8.5,0) {};
			
			\draw[line] (c1)--(c2);
			
			\node[vertex] (d1) at (11.25,2) {};
			\node[vertex] (d2) at (11.25,1) {};
			\node[vertex] (d3) at (10.5,0) {};
			\node[vertex] (d4) at (12,0) {};
			
			\draw[line] (d1)--(d2);
			\draw[line] (d2)--(d3);
			\draw[line] (d2)--(d4);
			
			\node[vertex] (e1) at (14.75,2) {};
			\node[vertex] (e2) at (14.75,1) {};
			\node[vertex] (e3) at (14,0) {};
			\node[vertex] (e4) at (15.5,0) {};
			
			\draw[line] (e1)--(e2);
			\draw[line] (e2)--(e3);
			\draw[line] (e2)--(e4);
			
			\draw[dashed] (2.5,-.5)--(2.5,2.5);
			\draw[dashed] (6,-.5)--(6,2.5);
			\draw[dashed] (9.5,-.5)--(9.5,2.5);
			\draw[dashed] (13,-.5)--(13,2.5);
			
		\end{tikzpicture}
		\caption{A deck with exactly two PD cards (Example \ref{mesalnonpd})}\label{shklnonpdxy}
	\end{center}
\end{figure}

\section{Conclusion} \label{conclusion}
The following is a direct consequence of the characterization of point determining graphs with $|G^\circ|=1$ in Section \ref{G01}.

 \begin{theorem}
 	Let $G$ be a graph that has exactly one point determining card $H$. Then $G=H+K_1$. \hfill $\square$
 \end{theorem}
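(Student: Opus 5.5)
Since $G$ has exactly one point determining card, Lemma \ref{2pd} gives that $G$ is itself point determining: a non-PD graph has either zero or exactly two PD cards. The PD cards of $G$ are exactly the cards $G-x$ with $x\in G^\circ$, so $|G^\circ|=\mu=1$. Write $G^\circ=\{x_0\}$, so that $H=G-x_0$.

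We now apply Proposition \ref{G0=1}. It gives positive integers $k_1,\ldots,k_m$ with $G=\mathcal{A}_{k_1}+\cdots+\mathcal{A}_{k_m}+x_0$, where $x_0$ is an isolated vertex. Hence $H=G-x_0=\mathcal{A}_{k_1}+\cdots+\mathcal{A}_{k_m}$ and $G=H+K_1$, the $K_1$ being the deleted vertex $x_0$.

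Two edge cases need a word. If $m=0$, then $G=K_1$, which has a single card, namely the empty graph; whether this counts as PD is a convention, and the formula $G=H+K_1$ still holds. Also, the proof of Proposition \ref{G0=1} begins by invoking Proposition \ref{noisole} to see that $x_0$ is isolated, so we get no extra assumptions beyond those already used there.

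The only step with real content is the first one: passing from "exactly one PD card" to "$G$ is PD with $|G^\circ|=1$". This rests on Lemma \ref{2pd} together with the fact that PD cards of a PD graph correspond exactly to its nucleus vertices. Everything after that is a direct read-off from Proposition \ref{G0=1}, so I expect no real obstacle.
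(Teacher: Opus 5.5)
Your proof is correct and follows the paper's route: the paper presents this theorem as a direct consequence of Proposition~\ref{G0=1}, and Lemma~\ref{2pd} supplies, exactly as in your first step, that a graph with exactly one point determining card is itself point determining with $|G^\circ|=1$. Only the opening observation of that proposition is actually needed --- by Proposition~\ref{noisole}, $G$ has an isolated vertex, which is necessarily unique and a nucleus, hence equal to $x_0$ --- so the full $\mathcal{A}_{k_i}$ decomposition of $H$ is more than the statement requires.
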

 
 Although this theorem is not novel, as it can already be derived from the disconnectedness of the graph, its significance lies in the method of proof rather than in the result.
 
 Our approach motivates the following question.
 
 \begin{que}
 	Let $G$ be a point determining graph such that each two point determining cards of $G$ are isomorphic. Can we reconstruct $G$ by its point determining cards?
 \end{que}

 The number of identical PD cards is important. 
 \begin{example}
 	The point determining cards of $C_5$ and $P_4 +K_1$ are isomorphic to $P_4$, but $C_5 \not\simeq P_4+K_1$. The graph $P_4+K_1$ has only one point determining card, while $C_5$ has five.
 \end{example}
 
 The point determining property of $G$ is also important.
 
\begin{example}
	Consider the path $P_5$ with consecutive vertices $x_1,x_2,x_3,x_4,x_5$. The graphs $P_5^+(x_2)$ and $P_5^+(x_3)$ are not point determining and each has exactly two point determining cards isomorphic to $P_5$, but $P_5^+(x_2) \not\simeq P_5^+(x_3)$.
\end{example} 
 
\bibliographystyle{plain}

\end{document}